\def\input@path{{\string"/Users/russw/Documents/Research/mypapers/Divisibility of binomial coefficients/\string"}}
\providecommand{\tabularnewline}{\\}
\numberwithin{equation}{section}
\numberwithin{figure}{section}
\theoremstyle{plain}
\newtheorem{thm}{\protect\theoremname}[section]
  \theoremstyle{plain}
  \newtheorem{question}[thm]{\protect\questionname}
  \theoremstyle{plain}
  \newtheorem{prop}[thm]{\protect\propositionname}
  \theoremstyle{plain}
  \newtheorem{lem}[thm]{\protect\lemmaname}
  \theoremstyle{remark}
  \newtheorem{rem}[thm]{\protect\remarkname}
  \theoremstyle{plain}
  \newtheorem{cor}[thm]{\protect\corollaryname}
\newcommand{\setlabel}[1]{\def\@currentlabel{#1}}
  \providecommand{\corollaryname}{Corollary}
  \providecommand{\lemmaname}{Lemma}
  \providecommand{\propositionname}{Proposition}
  \providecommand{\questionname}{Question}
  \providecommand{\remarkname}{Remark}
\providecommand{\theoremname}{Theorem}
\begin{document}
\global\long\def\cosetlat{\mathcal{C}}

\global\long\def\sglat{\mathcal{L}}

\global\long\def\ff{\mathbb{F}}

\global\long\def\nn{\mathbb{N}}

\global\long\def\zz{\mathbb{Z}}

\global\long\def\normalin{\mathrel{\triangleleft}}

\global\long\def\semidirect{\rtimes}

\global\long\def\comma{{,}}

\global\long\def\join{\mathbin{\ast}}

\title[Divisibility of binomials and generation of alternating groups]{Divisibility of binomial coefficients and generation of alternating
groups}

\author{John Shareshian and Russ Woodroofe}

\thanks{The first author was supported in part by NSF Grants DMS-0902142
and DMS-1202337.}

\address{Department of Mathematics, Washington University in St.~Louis, St.~Louis,
MO, 63130}

\email{shareshi@math.wustl.edu}

\address{Department of Mathematics \& Statistics, Mississippi State University,
Starkville, MS 39762}

\curraddr{University of Primorska, UP FAMNIT, Glagoljaška 8, 6000 Koper, Slovenia }

\email{rsw9@cornell.edu}

\urladdr{\url{http://rwoodroofe.math.msstate.edu/}}
\begin{abstract}
We examine an elementary problem on prime divisibility of binomial
coefficients. Our problem is motivated by several related questions
on alternating groups. 
\end{abstract}

\maketitle

\section{\label{sec:Introduction}Introduction}

We will discuss several closely related problems. The first is an
elementary problem concerning divisibility of binomial coefficients
by primes. Consider the following condition that a positive integer
$n$ might satisfy:
\begin{enumerate}
\item \label{enu:BinomCond}There exist primes $p$ and $r$ such that if
$1\leq k\leq n-1$, then the binomial coefficient ${n \choose k}$
is divisible by at least one of $p$ or $r$.
\end{enumerate}
\begin{question}
\label{que:MainQBinom} Does Condition (\ref{enu:BinomCond}) hold
for all positive integers $n$?
\end{question}

We were led to ask Question~\ref{que:MainQBinom} by a problem on
the alternating groups. Indeed, we consider several related group-theoretic
conditions on a positive integer $n$:
\begin{enumerate}[resume]
\item \label{enu:UnivGenCond}There exist primes $p$ and $r$ such that
if $H<A_{n}$ is a proper subgroup, then the index $[A_{n}:H]$ is
divisible by at least one of $p$ or $r$.
\item [(2')]\setcounter{enumi}{2}There exist primes $p$ and $r$ such
that if $P$ is a Sylow $p$-subgroup and $R$ a Sylow $r$-subgroup
of $A_{n}$, then $\left\langle P,R\right\rangle =A_{n}$.
\item \label{enu:PPOverSylConjGenCond}There exist a prime $p$ and a conjugacy
class $D$ in $A_{n}$ consisting of elements of prime power order,
such that if $P$ is a Sylow $p$-subgroup of $A_{n}$ and $d\in D$,
then $\left\langle P,d\right\rangle =A_{n}$.
\item \label{enu:PPConjGenCond}There exist conjugacy classes $C$ and $D$
in $A_{n}$, both consisting of elements of prime power order, such
that if $(c,d)\in C\times D$, then $\left\langle c,d\right\rangle =A_{n}$.
\item \label{enu:PrimeConjGenCond}\label{enu:LastCond}There exist conjugacy
classes $C$ and $D$ in $A_{n}$, both consisting of elements of
prime order, such that if $(c,d)\in C\times D$, then $\left\langle c,d\right\rangle =A_{n}$. 
\end{enumerate}
If we wish to specify one or both of the primes, then we may say that
$n$ satisfies Condition~(1) with $p$, or that $n$ satisfies Condition~(1)
with $p$ and $r$. We'll use similar language for the other conditions. 

Conditions~(\ref{enu:UnivGenCond}) and (\ref{enu:UnivGenCond}')
are equivalent, and each condition in the above list implies the previous
condition. That is, for any positive integer $n$ the following chain
of implications holds, where the primes $p$ and $r$ may be held
fixed. 
\begin{equation}
(5)\implies(4)\implies(3)\implies(2')\iff(2)\implies(1).\label{eq:ConditionsImplications}
\end{equation}
See also Proposition~\ref{thm:EquivUnivGenBinomDiv} below.

All implications in (\ref{eq:ConditionsImplications}) are completely
trivial or immediate from the definition of a Sylow subgroup, with
the exception of the implication $(2)\implies(1)$. This implication
follows since $A_{n}$ has subgroups of index ${n \choose k}$ for
each $0\leq k\leq n$. (The stabilizer in $A_{n}$ of a $k$-subset
of $[n]$ is such a subgroup.)

There are infinitely many positive integers $n$ that do not satisfy
Condition~(\ref{enu:PrimeConjGenCond}). However, the set of such
integers is rather sparse, and likely very sparse. See Proposition
\ref{prop:PrimeGenerationFails} and Theorem \ref{thm:AsymptoticDensity}
below. We are not aware of any integer $n$ for which Conditions~(\ref{enu:BinomCond})\textendash (\ref{enu:PPConjGenCond})
fail to hold. In addition to Question~\ref{que:MainQBinom}, we will
consider the following.

\theoremstyle{plain}\newtheorem*{questions}{Questions \ref*{sec:Introduction}.\ref*{enu:UnivGenCond}--\ref*{sec:Introduction}.\ref*{enu:PPConjGenCond}}\begin{questions}

\phantomsection

\setlabel{\ref*{sec:Introduction}.\ref*{enu:UnivGenCond}}\label{que:MainQUnivGen}

\setlabel{\ref*{sec:Introduction}.\ref*{enu:PPOverSylConjGenCond}}\label{que:MainQPPOverSyl}

\setlabel{\ref*{sec:Introduction}.\ref*{enu:PPConjGenCond}}\label{que:MainQPPGen}

Do Conditions (\ref{enu:UnivGenCond})\textendash (\ref{enu:PPConjGenCond})
hold for all positive integers $n$?

\end{questions}

\subsection{Motivations and related questions}

Question~\ref{que:MainQBinom} fits into a line of inquiry going
back to \cite{Kummer:1852} on the distribution of binomial coefficients
that are divisible by a given prime. The remaining conditions and
questions arose from our work and that of others on generation of
finite simple groups. Recall that the Classification of Finite Simple
Groups tells us that every simple group is isomorphic to one of the
following: an alternating group $A_{n}$ with $n\geq5$, a cyclic
group of prime order, a group of Lie type, or one of twenty six sporadic
groups. Conditions analogous to Conditions~(\ref{enu:UnivGenCond})\textendash (\ref{enu:LastCond})
are known or conjectured for sporadic and Lie type groups.

We ourselves became interested in these problems via Question~\ref{que:MainQUnivGen}.
In \cite{Shareshian/Woodroofe:2016}, we define a group $G$ to be
\emph{universally $(p,r)$-generated} if $G=\left\langle P,R\right\rangle $
for any Sylow $p$-subgroup $P$ and Sylow $r$-subgroup $R$. (Compare
with Condition~(\ref{enu:UnivGenCond}')!) We say $G$ is \emph{universally
$(2,*)$-generated }if there is some prime $p$ such that $G$ is
universally $(2,p)$-generated.\emph{ }We showed the following.
\begin{thm}[Shareshian and Woodroofe \cite{Shareshian/Woodroofe:2016}]
 \label{thm:NonaltGroupsUnivgen}If $G$ is a finite simple group
that is abelian, of Lie type, or sporadic, then $G$ is universally
$(2,*)$-generated.
\end{thm}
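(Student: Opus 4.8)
The plan is to reduce the statement to a question about maximal subgroups and then invoke the classification of primitive permutation groups of odd degree. First I would record the elementary reduction (essentially the equivalence in Proposition~\ref{thm:EquivUnivGenBinomDiv}): since $\left\langle P,R\right\rangle$ always contains a full Sylow $2$-subgroup and a full Sylow $r$-subgroup of $G$, the group $G$ is universally $(2,r)$-generated if and only if $G$ has no proper subgroup whose index is coprime to $2r$, which happens if and only if every maximal subgroup of $G$ of odd index has index divisible by $r$. So the task becomes: for each $G$ as in the theorem, exhibit a prime $r$ that divides the index of every maximal subgroup of $G$ of odd index. The abelian case is immediate, since the only abelian finite simple groups are the $\mathbb{Z}/p\mathbb{Z}$, and taking $r=p$ (so that a Sylow $r$-subgroup is all of $G$) does the job.

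The sporadic case is a finite verification: for each of the $26$ sporadic groups the maximal subgroups, and in particular those of odd index, are listed in the ATLAS, and in each case one checks directly that the indices of the odd-index maximal subgroups share a common prime divisor $r$. (Often one may take $r$ to be the largest prime dividing $|G|$.)

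The bulk of the argument is the groups of Lie type, where I would use the Liebeck--Saxl classification of the maximal subgroups of odd index: apart from an explicit short list of small exceptions, such a subgroup is either a parabolic subgroup or one of a restricted family of geometric subgroups (subfield, imprimitive, tensor-product, orthogonal-in-symplectic, extraspecial-normalizer, and almost-simple types). When the defining characteristic is $2$ a Sylow $2$-subgroup is the unipotent radical of a Borel subgroup, so by a standard Borel--Tits argument every subgroup of odd index is parabolic, and the odd-index maximal subgroups are exactly the maximal parabolics. In general I would choose $r$ to be a primitive prime divisor of $q^{h}-1$ (or the appropriate twisted analogue), where $h$ is the Coxeter number of $G$; the key point is that $q^{h}-1$ is one of the cyclotomic factors of $|G|$, while every proper parabolic has a Levi factor whose fundamental degrees are all strictly smaller than $h$, so $r\nmid|P|$ for every maximal parabolic $P$, whence $r\mid[G:P]$. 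For the classical groups this is a cyclotomic computation --- indeed, for $\mathrm{PSL}_{n}(q)$ it is exactly a divisibility statement about Gaussian binomial coefficients, very much in the spirit of this paper.

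The main obstacle is the remaining, non-parabolic odd-index maximal subgroups in odd characteristic, together with the finitely many exceptional cases. For the geometric subgroups one must check, type by type through $\mathrm{PSL}_{n}(q)$, $\mathrm{PSU}_{n}(q)$, $\mathrm{PSp}_{2n}(q)$, $\mathrm{P\Omega}_{m}^{\epsilon}(q)$ and the exceptional groups, that the chosen primitive prime divisor $r$ does not divide the order of any such subgroup: their orders are either bounded (extraspecial-normalizer and almost-simple types) or assembled from cyclotomic values of strictly smaller level (subfield, imprimitive, tensor, and $\mathrm{O}^{\epsilon}$ inside $\mathrm{Sp}$), so this holds once $r$ is taken large enough, though in a few subcases one must adjust the choice (for instance to a primitive prime divisor of $q^{h/2}-1$ or of $q^{n}+1$) so that a single prime simultaneously meets every odd-index maximal subgroup. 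Finally, the Zsygmondy exceptional pairs for which no primitive prime divisor exists (such as $\mathrm{PSp}_{6}(2)$) and the sporadic small-rank exceptions in the Liebeck--Saxl list must be handled individually using the known subgroup structure. I expect the delicate part to be the odd-characteristic classical groups --- arranging one prime that is coprime to the orders of all odd-index maximal subgroups at once.
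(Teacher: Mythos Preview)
The paper does not contain a proof of this theorem. Theorem~\ref{thm:NonaltGroupsUnivgen} is stated with attribution to \cite{Shareshian/Woodroofe:2014UNP} and is used only as motivation for the questions that follow; no argument for it appears anywhere in the present paper. There is therefore nothing here against which to compare your proposal.

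That said, your outline is a plausible strategy and is broadly the kind of argument one would expect in the cited paper: reduce to maximal subgroups of odd index, handle the abelian and sporadic cases by inspection, and for groups of Lie type invoke the Liebeck--Saxl classification of odd-index maximal subgroups together with a Zsygmondy-type choice of prime. As you yourself note, the genuine work lies in the odd-characteristic classical groups, where one must verify that a single prime $r$ simultaneously avoids the orders of all non-parabolic odd-index maximal subgroups, and in the finite list of Zsygmondy exceptions and small-rank cases. Your sketch does not carry out this verification, so it is an outline rather than a proof; but it is not the wrong outline. If you want to check it against the actual argument, you will need to consult \cite{Shareshian/Woodroofe:2014UNP} rather than the present paper.
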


We used Theorem~\ref{thm:NonaltGroupsUnivgen}, along with fixed-point
theorems of Smith \cite{Smith:1941} and of Oliver \cite{Oliver:1975},
to show that the order complex of the coset poset of any finite group
is non-contractible. 

In light of Theorem~\ref{thm:NonaltGroupsUnivgen}, it is natural
to ask whether $A_{n}$ is universally $(2,*)$-generated for every
$n$ \textendash{} that is, whether every $n$ satisfies Condition~(\ref{enu:UnivGenCond})
with $2$. This is not the case. The first failure of universal $(2,*)$-generation
is at $n=7$. It may be easier to understand the second failure, at
$15$, since $n=15$ does not even satisfy Condition~(\ref{enu:BinomCond})
with $2$. Question~\ref{que:MainQUnivGen} naturally suggests itself.
We will further discuss the case $p=2$ below in Section~\ref{subsec:p=00003D2}. 

We found that similar conditions had been examined earlier. The general
problem of generation by elements selected from fixed conjugacy classes
has been more broadly studied under the name of ``invariable generation''.
See for example \cite{Detomi/Lucchini:2015,Dixon:1992,Eberhard/Ford/Green:2015UNP,Kantor/Lubotzky/Shalev:2011}.
Dolfi, Guralnick, Herzog and Praeger first ask Question~\ref{que:MainQPPGen}
in \cite[Section 6]{Dolfi/Guralnick/Herzog/Praeger:2012}. These authors
conjecture that the analogue of Condition~(\ref{enu:PrimeConjGenCond})
holds for all but finitely many simple groups of Lie type, but point
out that the corresponding statement for alternating groups occasionally
fails.

Condition~(\ref{enu:PPOverSylConjGenCond}) interpolates naturally
between Conditions~(\ref{enu:UnivGenCond}) and (\ref{enu:PPConjGenCond}).
Although they do not ask Question~\ref{que:MainQPPOverSyl}, Damian
and Lucchini show in \cite{Damian/Lucchini:2007} that an analogue
of Condition~(\ref{enu:PPOverSylConjGenCond}) holds for many sporadic
simple groups and groups of Lie type. Indeed, they show that many
simple groups are generated by a Sylow $2$-subgroup $P$ together
with any element of a certain conjugacy class consisting of elements
of prime order. 

\subsection{Results for arbitrary primes}

Our first result adds an additional implication to the list in (\ref{eq:ConditionsImplications}).
\begin{thm}
\label{thm:EquivUnivGenBinomDiv} Let $p$ and $r$ be primes. If
the positive integer $n$ is not a prime power, then Conditions~(\ref{enu:BinomCond})
and (\ref{enu:UnivGenCond}) are equivalent for $n$ with $p$ and
$r$.
\end{thm}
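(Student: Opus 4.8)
The plan is to prove the two directions separately. The implication $(2)\implies(1)$ is already established in the introduction (the stabilizer of a $k$-subset has index ${n\choose k}$), and it holds for every $n$, so the real content is the reverse implication $(1)\implies(2)$ under the hypothesis that $n$ is not a prime power. Recall that $(2)$ and $(2')$ are equivalent, so it suffices to show: if $p,r$ are primes and every ${n\choose k}$ with $1\le k\le n-1$ is divisible by $p$ or by $r$, then $\langle P,R\rangle=A_n$ for any Sylow $p$-subgroup $P$ and Sylow $r$-subgroup $R$.

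First I would set $H=\langle P,R\rangle$ and aim to show $H=A_n$ by ruling out every proper overgroup. The key tool will be the O'Nan--Scott theorem (or at least the classification of maximal subgroups of $A_n$): a maximal subgroup $M<A_n$ is intransitive (a subset stabilizer, index ${n\choose k}$), transitive imprimitive (a wreath-product type stabilizer), or primitive. If $H\le M$ with $M$ intransitive of index ${n\choose k}$, then by hypothesis $p\mid{n\choose k}$ or $r\mid{n\choose k}$; but $[A_n:M]$ must be coprime to $p$ since $H\supseteq P$ contains a full Sylow $p$-subgroup, and likewise coprime to $r$ — a contradiction. The same divisibility argument kills any maximal $M$ whose index is divisible by $p$ or $r$. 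So the whole problem reduces to showing that $A_n$ has \emph{no} maximal subgroup whose index is coprime to both $p$ and $r$, other than those we have already excluded. For intransitive maximal subgroups this is exactly the given Condition~(1). For transitive imprimitive and primitive maximal subgroups, I would invoke the classification of transitive subgroups of $S_n$ of prime-power index (work of Guralnick and others): a transitive subgroup of index a power of a single prime is highly restricted, and here we have index a product of two prime powers. This is where the hypothesis that $n$ is not a prime power should enter decisively — when $n=q^a$, the affine group $\mathrm{AGL}_1(q^a)$ and relatives have $p$-power index, which is precisely the obstruction the theorem excludes.

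The main obstacle I anticipate is the analysis of transitive maximal subgroups. One clean route: if $H=\langle P,R\rangle$ is transitive (which one should check first — a transitive group containing a Sylow $p$-subgroup when $p\mid n$, etc.), then its index divides $n!/|H|$, and comparing this with the constraint that the index be coprime to $p$ and $r$ forces $|H|$ to contain the full $p$- and $r$-parts of $n!$. By Legendre's formula this is a very stringent numerical condition on $|H|$, and combined with the classification of primitive groups (most have much smaller order than $|A_n|/\gcd$ allows) one should be able to conclude $H=A_n$. The bookkeeping to handle imprimitive wreath products $(S_a\wr S_b)\cap A_n$ with $ab=n$ — showing their index $n!/(a!^b b!)$ is always divisible by $p$ or $r$ given Condition~(1) — is the step most likely to require care, and may itself reduce to an application of Condition~(1) for the smaller integers $a$ and $b$ together with Kummer's theorem on carries. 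I would structure the write-up so that the intransitive case is dispatched immediately and the transitive case is isolated as a lemma whose proof leans on the prime-power-index classification.
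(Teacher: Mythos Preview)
Your overall scaffolding is right: reduce to maximal subgroups, use the intransitive/imprimitive/primitive trichotomy, and dispatch the intransitive case via Condition~(1). That part matches the paper exactly. The difficulties are in the other two cases, and here your plan has real gaps.

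For the primitive case, the tool you reach for does not fit. If $H=\langle P,R\rangle$ contains a Sylow $p$-subgroup and a Sylow $r$-subgroup, then $[A_n:H]$ is coprime to both $p$ and $r$; it is \emph{not} ``a product of two prime powers'', and Guralnick's classification of subgroups of prime-power index says nothing about it. Your alternative suggestion, to use order bounds on primitive groups, could be made to work but would drag in CFSG-dependent estimates. The paper instead uses Jordan's classical theorem: for $n\ge 9$, any prime $q\le n-3$ divides the index of every proper primitive subgroup of $A_n$ (since a Sylow $q$-subgroup contains a $q$-cycle, or a product of two transpositions when $q=2$). Because $n$ is not a prime power, Kummer forces $p\ne r$, so at least one of them is $\le n-3$, and the primitive case is done. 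This is both elementary and where the ``not a prime power'' hypothesis actually enters.

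For the imprimitive case you acknowledge the gap but do not close it; ``may reduce to Condition~(1) for smaller $a$ and $b$'' is not a plan. The paper proves a Kummer-type lemma for the equipartition number $I_{n,d}=n!/\bigl((d!)^{n/d}(n/d)!\bigr)$: a prime $q$ divides $I_{n,d}$ if and only if adding $n/d$ copies of $d$ in base $q$ requires a carry and $d$ is not a power of $q$. Given Condition~(1), one of $p,r$ divides $\binom{n}{d}$, hence (by Kummer) there is a carry adding $d$ and $n-d$, hence a carry adding $n/d$ copies of $d$; this handles all $d$ except powers of $p$ or $r$, and those are finished by a short separate argument using $\binom{n}{p^a}$. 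This lemma is the genuine content you are missing.
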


The case where $n$ is a prime power is not difficult. 
\begin{prop}
\label{pro:UgenAltPP} If $n$ is a power of the prime $p$, then

\begin{enumerate}
\item[(A)] \setlabel{A} $n$ satisfies Condition~(\ref{enu:PPOverSylConjGenCond})
with a Sylow $2$-subgroup unless $n=7$, and
\item[(B)] \setlabel{B} $n$ satisfies Condition~(\ref{enu:PPConjGenCond})
with $p$.
\end{enumerate}
\end{prop}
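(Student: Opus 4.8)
The plan is to produce, in each case, explicit conjugacy classes of cycle-like elements, and to apply classical results on primitive permutation groups containing a cycle. Write $n=p^{k}$, realize $A_{n}$ on $[n]$, and recall that a cycle of odd length is even, that an $n$-cycle is even when $n$ is odd, and that a product of two cycles of length $n/2$ is even. The tiny prime powers $n\in\{2,3,4,5\}$ are settled by hand --- for example, when $n=4$ one takes $D$ to be a class of $3$-cycles, so that $\langle P,d\rangle$ has order divisible by $6$ and must equal $A_{4}$ --- so assume $n\geq 7$ from now on.

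For part~(A), suppose first that $p$ is odd. The case $n=7$ is the genuine exception: the maximal subgroups of $A_{7}$ of odd index are the conjugates of $A_{6}$, of $S_{5}$, and of the two classes of $\mathrm{PSL}(2,7)=\mathrm{PSL}(3,2)$, each of which contains a full Sylow $2$-subgroup of $A_{7}$; but an element of $A_{7}$ of prime power order either has order at most $5$ --- hence fixes a point and lies in a point stabilizer $A_{6}$ --- or is a $7$-cycle, lying in one of the subgroups $\mathrm{PSL}(2,7)$ (which act transitively on $7$ points); so for every conjugacy class $D$ of prime-power-order elements there is a choice of Sylow $2$-subgroup $P$ and $d\in D$ with $\langle P,d\rangle$ inside a maximal subgroup, and Condition~(\ref{enu:PPOverSylConjGenCond}) with $2$ fails. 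For every other $n=p^{k}$ with $p$ odd, let $D$ be a class of $n$-cycles (these have $p$-power order). For $P$ a Sylow $2$-subgroup of $A_{n}$ and $d\in D$, the group $\langle P,d\rangle$ contains the $n$-cycle $d$ and so is transitive, and it is in fact primitive: this is automatic when $n=p$ is prime, and when $k\geq 2$ it holds because the odd number $n$ has only one binary digit at position $0$, so $P$ has a unique fixed point on $[n]$, whereas a $P$-invariant partition of $[n]$ into $p^{i}$ blocks ($1\leq i\leq k-1$) would be fixed setwise on some block $B_{0}$ (since $P$ permutes the odd number $p^{k-i}$ of blocks) and hence pointwise, producing $\geq p^{i}\geq 3$ fixed points. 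By the classification of primitive groups containing an $n$-cycle (Jordan, Feit, Jones), $\langle P,d\rangle$ is then $A_{n}$, $S_{n}$, a group between $\mathrm{PGL}(d',q)$ and $\mathrm{P\Gamma L}(d',q)$ with $n=(q^{d'}-1)/(q-1)$, a subgroup of $\mathrm{AGL}(1,p)$ containing $C_{p}$ (when $n=p$), or one of $M_{11},\mathrm{PSL}(2,11),M_{23}$; since $\langle P,d\rangle\leq A_{n}$ it is not $S_{n}$, and comparing $2$-parts rules out each remaining possibility containing a Sylow $2$-subgroup of $A_{n}$, so $\langle P,d\rangle=A_{n}$. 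If instead $p=2$, so $n=2^{k}$ with $k\geq 3$, then a Sylow $2$-subgroup of $A_{2^{k}}$ is transitive, and we take $D$ to be a class of $q$-cycles for a prime $q$ with $n/2<q\leq n-3$ (such $q$ exists by Bertrand's postulate and a check for small $k$). Then $\langle P,d\rangle$ is transitive; it is primitive because a $q$-cycle with $q>n/2\geq 2^{i}$ fits in no block of size $2^{i}$ and cannot act on a $2$-power number of blocks as a single cycle; and Jordan's theorem on primitive groups containing a $q$-cycle with $q\leq n-3$ yields $\langle P,d\rangle=A_{n}$.

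Part~(B) follows the same template, with two cycle classes and no Sylow constraint. If $p$ is odd, take $C$ to be a class of $n$-cycles (of $p$-cycles when $n=p$); if $p=2$, take $C$ to be a class of elements of cycle type $(2^{k-1},2^{k-1})$, which have $2$-power order. In every case take $D$ to be a class of $q$-cycles, with $q$ a prime satisfying $n/2<q\leq n-3$ when $k\geq 2$, and, when $n=p$ is prime, an odd prime below $n$ that divides the order of no $2$-transitive group of degree $n$ other than ones containing $A_{n}$; such a $q$ exists by an easy count, with the remaining small cases checked directly. For $c\in C$ and $d\in D$: if $p$ is odd with $k\geq 2$, or if $p=2$ (where $q>n/2$ forces $d$'s support to meet both orbits of $c$), then $\langle c,d\rangle$ is transitive and, by the block-size and divisibility argument of part~(A), primitive, so Jordan's theorem gives $\langle c,d\rangle=A_{n}$; if $n=p$, then $\langle c,d\rangle$ is a primitive group containing an $n$-cycle, and the classification together with the choice of $q$ forces $\langle c,d\rangle=A_{n}$. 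As $C$ consists of $p$-elements throughout, $n$ satisfies Condition~(\ref{enu:PPConjGenCond}) with $p$.

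I expect the main obstacle to be establishing primitivity of $\langle P,d\rangle$ (and of $\langle c,d\rangle$): it is what pins down the cycle types --- in particular, why for $p=2$ the class $C$ in part~(B) must be a product of two equal cycles rather than a single cycle --- and it rests on the facts that a Sylow $2$-subgroup of $A_{p^{k}}$ ($p$ odd) has exactly one fixed point and that a prime-length cycle longer than $n/2$ cannot be housed in any block system of $p$-power block size. The remaining work --- excluding the finitely many exceptional primitive overgroups by a $2$-part comparison and exhibiting the primes supplied by Bertrand's postulate --- is routine but calls for some care in a few small cases, and $n=7$ must be handled by itself.
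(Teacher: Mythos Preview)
Your route is quite different from the paper's, and it carries a real gap.

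The paper never attempts to prove primitivity of $\langle P,d\rangle$ directly, and never invokes the classification of primitive groups containing an $n$-cycle. Instead, for $n\ge 9$ it argues by index divisibility: the $n$-cycle (or the pair of $2^{a-1}$-cycles) supplies transitivity, Lemma~\ref{lem:ImprimitiveDivIndex} shows that $2$ (respectively the Bertrand prime $r$ with $n/2<r<n-2$) divides the index $I_{n,d}$ of every imprimitive maximal subgroup, and Lemma~\ref{lem:PrimitiveDivIndex} (a direct consequence of Jordan's theorem on $p$-cycles with $p\le n-3$) shows the same for every primitive proper subgroup. Thus no maximal subgroup of $A_n$ can contain both the Sylow subgroup and the chosen cycle element, and the proof is a few lines. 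In particular, for part~(A) with $p$ odd one only needs that $2\mid I_{n,d}$ for every proper divisor $d$ of $n=p^k$, which is immediate from Lemma~\ref{lem:ImprimitiveDivIndex} since $d$ is odd and summing two copies of $d$ in base $2$ already forces a carry.

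Your primitivity argument for $p$ odd and $k\ge 2$ does not go through. You assert that a block $B_0$ fixed setwise by the Sylow $2$-subgroup $P$ must be fixed pointwise, but this is false. Take $n=27=16+8+2+1$: the orbits of $P$ on $[27]$ have sizes $16,8,2,1$, and for a putative block system with blocks of size $3$ the unique $P$-fixed block would have to be the union of the size-$1$ and size-$2$ orbits, on which $P$ acts nontrivially. So ``setwise $\Rightarrow$ pointwise'' fails, and your contradiction with the unique fixed point does not materialise. (Primitivity \emph{is} true here, but for the divisibility reason above, not for the reason you give.) Once primitivity is in doubt, the subsequent appeal to the Feit--Jones classification of primitive groups with an $n$-cycle is unsupported; and even granting primitivity, that classification plus a $2$-part comparison is considerably heavier than the single invocation of Jordan's theorem that the paper uses. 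Your treatment of the $n=p$ case in part~(B) (``an odd prime $q$ dividing the order of no $2$-transitive group of degree $p$ \ldots\ by an easy count'') is likewise much more delicate than what is needed: the paper simply takes $r$ with $n/2<r<n-2$ and lets Lemma~\ref{lem:PrimitiveDivIndex} do the work.
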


In particular, it follows from Theorem~\ref{thm:EquivUnivGenBinomDiv}
and Proposition~\ref{pro:UgenAltPP} that Questions~\ref{que:MainQBinom}
and \ref{que:MainQUnivGen} are equivalent. We remark that the requirement
that $n\neq7$ in Proposition~\ref{pro:UgenAltPP}~(A) is necessary,
as $n=7$ satisfies Condition~(\ref{enu:BinomCond}), but not Condition~(\ref{enu:UnivGenCond}),
with the prime $2$.

While Questions~\ref{que:MainQBinom}\textendash \ref{que:MainQPPGen}
are still open, we have amassed a large collection of integers for
which the answers are ``yes''. The \emph{asymptotic density} \cite{Niven/Zuckerman/Montgomery:1991}
of a set $S$ of positive integers is defined to be 
\[
\liminf_{M\rightarrow\infty}\frac{\left|S\cap[M]\right|}{M}.
\]

Dolfi, Guralnick, Herzog and Praeger remark in \cite{Dolfi/Guralnick/Herzog/Praeger:2012}
that Condition~(\ref{enu:PrimeConjGenCond}) appears likely to hold
with asymptotic density 1. We show the following.
\begin{thm}
\label{thm:AsymptoticDensity}Let $\alpha$ be the asymptotic density
of the set of positive integers $n$ that satisfy Condition~(\ref{enu:PrimeConjGenCond}),
and let $\rho$ denote the Dickman-de Bruijn function (see for example
\cite{Granville:2008}). We have 

\begin{enumerate}
\item [(A)] $\alpha\geq1-\rho(20)>1-10^{-28}$, and
\item [(B)] if either the Riemann hypothesis or the Cramér Conjecture holds,
then $\alpha=1$.
\end{enumerate}
\end{thm}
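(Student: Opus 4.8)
The plan is to produce, for all but a density-zero set of $n$, an explicit pair of primes $p,r$ and a pair of prime-order conjugacy classes $C,D$ in $A_n$ witnessing Condition~(\ref{enu:PrimeConjGenCond}). The natural source of such classes is the classical machinery on primitive groups containing elements of large prime order: if $g\in A_n$ is a $p$-cycle with $p$ a prime in the range, say, $n/2<p\le n-3$, and if $\langle g,h\rangle$ is primitive, then by a Jordan-type theorem $\langle g,h\rangle$ contains $A_n$. So the strategy splits into two independent tasks: (i) arrange that $n$ has two suitable primes $p$ and $r$ of the right shapes so that $C$ (a class of $p$-cycles) and $D$ (a class of $r$-cycles, with the cycle type chosen to make $c,d$ even and to force transitivity/primitivity of $\langle c,d\rangle$ for \emph{every} pair), and (ii) show the set of $n$ for which no such pair of primes exists has density zero, with a quantitative bound coming from the Dickman--de Bruijn function.

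First I would fix the group-theoretic core. Choose $C$ to be the class of elements of $A_n$ that are a single $p$-cycle (so $p\le n$, and we need $n-p$ even or we take a $p$-cycle times a transposition-free even completion — more simply take $p$ with $n-p$ fixed points and $p$ odd so the $p$-cycle is already even), and choose $D$ similarly from a prime $r$. For any $c\in C$, $d\in D$, the subgroup $\langle c,d\rangle$ is generated by two elements of prime order $p$ and $r$; I want to guarantee it is transitive and primitive. Transitivity: if the supports of $c$ and $d$ together cover $[n]$ and overlap, which I can force by the arithmetic of $p+r$ versus $n$, then $\langle c,d\rangle$ is transitive; primitivity then follows because a transitive group containing a $p$-cycle with $p>n/2$ (or, more robustly, with $p\ge (n{+}\text{something})/2$) is primitive by a standard argument (a block system would have block size dividing $n$ and the $p$-cycle would have to respect it, impossible when $p$ is large relative to $n$). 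Then invoke the theorem (Jordan, or Marggraf, or the modern version of it) that a primitive subgroup of $S_n$ containing a $p$-cycle for a prime $p\le n-3$ must contain $A_n$; since $c,d\in A_n$ we get $\langle c,d\rangle=A_n$. This pins down exactly which arithmetic conditions on $(n,p,r)$ suffice.

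The analytic heart is then: \emph{for which $n$ do two primes $p,r$ with the required size and congruence constraints exist?} The constraints will be something like: there exist primes $p,r$ with $n/2<p<r\le n-3$, with $p+r$ in a prescribed interval so that the supports overlap in the right parity, and with $n-p$, $n-r$ of the parity making the cycles even (or handled by an extra fixed transposition absorbed elsewhere). Asking for two primes in a short interval just below $n$ is exactly the kind of statement controlled by gaps between consecutive primes. For part~(B): under the Riemann Hypothesis, prime gaps near $x$ are $O(\sqrt{x}\log x)$, and under Cramér even $O((\log x)^2)$; either bound easily guarantees the needed primes for all sufficiently large $n$, and one checks the finitely many small $n$ by hand (the known sporadic failures, e.g.\ those behind Proposition~\ref{prop:PrimeGenerationFails}, are finite and do not affect density), giving $\alpha=1$. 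For part~(A), the unconditional bound, I would instead count \emph{bad} $n$ directly: $n$ is bad only if the interval $(n-\Delta,\,n]$ (for the relevant window length $\Delta$, which I expect to be linear in $n$, something like $n/2$ or a constant fraction) fails to contain primes of the needed type, and a failure of that sort forces $n$ (or a nearby integer) to have all of its "large" prime factors small — i.e.\ to be governed by $\psi(x,x^{1/20})$-type smooth-number counts — whence the density of bad $n$ is at most $\rho(20)<10^{-28}$. The precise exponent $20$ will come from matching the window length to the Dickman threshold; getting that matching tight, and making sure the parity/overlap bookkeeping doesn't cost more than a density-zero correction, is the step I expect to be the main obstacle. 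The group theory is essentially a citation; the delicate part is the elementary number theory that converts "no good prime pair near $n$" into "$n$ lies in a thin set of smooth-ish numbers," and then reading off the constant $\rho(20)$ cleanly.
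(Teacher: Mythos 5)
There is a genuine gap, and it sits at the group-theoretic core rather than in the analytic bookkeeping. Condition~(\ref{enu:PrimeConjGenCond}) quantifies over \emph{every} pair $(c,d)\in C\times D$, so if both of your classes consist of single cycles with fixed points (a $p$-cycle and an $r$-cycle with $p,r<n$), you can always conjugate $d$ so that $c$ and $d$ fix a common point: the complement of $\operatorname{supp}(c)$ is nonempty, and an $r$-set with $r\leq n-1$ can be chosen to miss a point of it. That pair generates an intransitive proper subgroup, so your classes never witness Condition~(\ref{enu:PrimeConjGenCond}), no matter how the primes are sized. This is exactly the observation that opens the paper's proof of Proposition~\ref{prop:PrimeGenerationFails}: at least one of the two classes must consist of derangements. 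A derangement of prime order $p$ in $A_{n}$ has cycle type $p^{n/p}$, which forces $p\mid n$. The paper's Sieve Lemma~\ref{lem:SgdivSieve}~(B) therefore takes $C$ to be the class of fixed-point-free products of $n/p$ disjoint $p$-cycles (with $p$ the relevant prime divisor of $n$) and $D$ a class of $r$-cycles with $r+2<n<r+p$; transitivity of every $\langle c,d\rangle$ then comes from $r+p>n$, imprimitivity is excluded by Corollary~\ref{cor:ImprimitiveDivLargepp} since $r>n/2$, and primitivity is excluded by Jordan's theorem since $r\leq n-3$.

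This flaw also explains why your analytic step cannot be repaired as stated. In your setup the only constraint is the existence of two primes close to $n$, which holds unconditionally for all large $n$ by known prime-gap bounds; your argument would thus prove Condition~(\ref{enu:PrimeConjGenCond}) for all sufficiently large $n$, contradicting Proposition~\ref{prop:PrimeGenerationFails} (all $n=2^{a}$, $a\geq3$, fail). The reason $\rho(20)$ appears is precisely that the correct criterion involves the largest prime \emph{divisor} $p$ of $n$: failure of the sieve forces $p\leq n-r(n)$, so combining Jia's theorem that $n-r(n)<n^{1/20}$ off a density-zero set with Dickman's estimate that the $n^{1/20}$-smooth integers have density $\rho(20)$ yields $\alpha\geq1-\rho(20)$; your sketch asserts that ``no good prime pair near $n$'' makes $n$ smooth, but nothing in your setup involves the factorization of $n$, so that implication has no source. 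Likewise, part~(B) in the paper does not use the RH gap bound $O(\sqrt{x}\log x)$ directly; it uses Cram\'er's conditional result (\ref{eq:CramerRHEstimate}) that gaps exceeding $\log^{3}$ are negligible in density, paired with Rankin's theorem (\ref{eq:RankinEstimate}) that integers all of whose prime factors are at most $\log^{3}n$ have density zero.
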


The authors also claim in \cite{Dolfi/Guralnick/Herzog/Praeger:2012}
that Condition~(\ref{enu:PrimeConjGenCond}) fails for infinitely
many values of $n$, and that the smallest $n$ for which Condition~(\ref{enu:PrimeConjGenCond})
fails is $210$. We will see that the first claim is true, but the
second is not.
\begin{prop}
\label{prop:PrimeGenerationFails}For any $a\geq3$, the integer $n=2^{a}$
fails to satisfy Condition~(\ref{enu:PrimeConjGenCond}). 
\end{prop}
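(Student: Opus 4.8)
The plan is to show that for $n = 2^a$ with $a \geq 3$, any two conjugacy classes $C, D$ of elements of prime order in $A_n$ fail the generation condition: there is always a choice of $c \in C$ and $d \in D$ with $\langle c, d \rangle \neq A_n$. An element of prime order $q$ in $A_n$ is a product of $n/q$ disjoint $q$-cycles (when $q \mid n$) or of $(n-j)/q$ disjoint $q$-cycles fixing $j > 0$ points; since $n = 2^a$, the only prime $q$ for which a fixed-point-free prime-order element exists is $q = 2$, and then $c$ is a product of $2^{a-1}$ transpositions — but such an element is odd, hence not in $A_n$. So \emph{every} prime-order element of $A_n$ for $n = 2^a$ has at least one fixed point, with the single exception of fixed-point-free elements of order $3$ when $3 \mid n$, which cannot happen here. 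Therefore both $c$ and $d$ move at most $n - 1$ points.

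The key step is then to exploit the fixed points. If $c$ fixes a point $i$ and $d$ fixes a point $j$, I want to conjugate within the class so that $c$ and $d$ share a common fixed point; then $\langle c, d \rangle$ stabilizes that point and is a proper subgroup. The obstruction is that the \emph{pair} $(c,d)$ ranges over $C \times D$ with $c$ fixed and only $d$ allowed to vary (or vice versa) — but since $C$ and $D$ are full conjugacy classes, we may pick any representative of each independently, so choosing $c$ to fix $i$ and $d$ to fix $i$ as well is legitimate. This makes $\langle c, d \rangle \leq (A_n)_i \cong A_{n-1} < A_n$, so the condition fails. The argument needs the elementary fact, recalled above, that when $n$ is a power of $2$ the only primes $q$ dividing $\lvert A_n \rvert$ for which $q$-cycles could combine to a fixed-point-free even permutation are those with $q \mid n$ and $n/q$ even giving an even permutation — and one checks $q = 2$ gives odd, while $q$ odd forces $q \nmid 2^a$.

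The main obstacle, and the place to be careful, is verifying that there is genuinely no way to choose prime-order classes in $A_{2^a}$ consisting of fixed-point-free elements. This reduces to: if $q$ is prime, $q \mid 2^a$, so $q = 2$; a fixed-point-free element of order $2$ in $S_{2^a}$ is a product of $2^{a-1}$ transpositions, whose sign is $(-1)^{2^{a-1}} = 1$ precisely when $a \geq 2$ — wait, that is even, so such elements \emph{are} in $A_n$. I would therefore need to handle this case separately: when $c$ and $d$ are both fixed-point-free involutions (products of $2^{a-1}$ transpositions), I must instead find a proper subgroup containing a conjugate pair. Here the right move is a parity/block argument: two fixed-point-free involutions generate a dihedral-type group, and by choosing $d = c^\sigma$ for suitable $\sigma$ one can force $\langle c, d \rangle$ to preserve a nontrivial system of blocks (e.g. pair up the $2$-cycles), landing inside an imprimitive maximal subgroup $S_2 \wr S_{2^{a-1}} \cap A_n$ or $(S_{2^{a-1}} \wr S_2) \cap A_n$. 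Checking that this imprimitive subgroup is proper and really contains a conjugate of the pair — using $a \geq 3$ so that $2^{a-1} \geq 4$ leaves room — is the crux, and is where the hypothesis $a \geq 3$ (rather than $a \geq 2$) will be used.
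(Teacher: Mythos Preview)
Your reduction to the case where at least one class consists of fixed-point-free elements is correct, and you eventually catch your own sign error: for $a\geq 2$ the product of $2^{a-1}$ transpositions \emph{is} even, so fixed-point-free involutions do lie in $A_n$. But from that point on you only treat the sub-case where \emph{both} $C$ and $D$ consist of fixed-point-free involutions. That case is actually trivial --- the fixed-point-free involutions form a single $A_n$-class (cycle type $2^{n/2}$ does not split), so $C=D$ and one may take $c=d$, giving $\langle c,d\rangle$ of order~$2$ --- and your imprimitivity argument is unnecessary machinery for it.

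The genuine gap is the \emph{mixed} case: $C$ the class of fixed-point-free involutions and $D$ a class of elements of odd prime order $p$ (necessarily with fixed points, since $p\nmid 2^a$). You never address this. When $\langle d\rangle$ has at least three orbits one can still arrange intransitivity by choosing two orbits $O_1,O_2$ of equal size (this is always possible here) and picking $c$ so that $O_1\cup O_2$ is $c$-invariant. But when $\langle d\rangle$ has exactly two orbits, $d$ is an $(n-1)$-cycle and $p=2^a-1$ is a Mersenne prime; now $\langle c,d\rangle$ is automatically transitive for \emph{every} choice, and one needs a genuinely different idea. The paper handles this by exhibiting a primitive proper subgroup $H\cong PSL_2(p)$ acting on the $p+1$ projective points, and observing that $H$ contains both a $p$-element and a fixed-point-free involution. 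Without this (or an equivalent construction) your argument cannot close, and the hypothesis $a\geq 3$ enters precisely because $n=8$, $p=7$ is the first instance of this obstruction.
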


Theorem~\ref{thm:AsymptoticDensity} suggests a positive answer to
Questions~\ref{que:MainQBinom}\textendash \ref{que:MainQPPGen}
for all but a vanishingly sparse set of large integers. We have also
examined many small integers with the aid of a computer, verifying
 the following.
\begin{prop}
\label{prop:MainComputation}Every $n\leq1\comma000\comma000\comma000$
satisfies Condition~(\ref{enu:UnivGenCond}).
\end{prop}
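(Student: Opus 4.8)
The plan is to reduce Proposition~\ref{prop:MainComputation} to Question~\ref{que:MainQBinom} and then carry out a finite computation. By Theorem~\ref{thm:EquivUnivGenBinomDiv}, for $n$ not a prime power it suffices to verify Condition~(\ref{enu:BinomCond}) for $n$, i.e.\ to exhibit primes $p$ and $r$ such that every ${n\choose k}$ with $1\le k\le n-1$ is divisible by $p$ or $r$. For $n$ a prime power, Proposition~\ref{pro:UgenAltPP}~(A) already gives Condition~(\ref{enu:PPOverSylConjGenCond}) (hence Condition~(\ref{enu:UnivGenCond})) for all $n\ne 7$, and $n=7$ is checked by hand (it satisfies Condition~(\ref{enu:UnivGenCond}) with a different pair of primes, e.g.\ via $p=5$). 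So the real content is a search, for each non-prime-power $n\le 10^9$, for a suitable pair $(p,r)$.

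**The search itself** exploits Kummer's theorem: the exponent of a prime $p$ in ${n\choose k}$ equals the number of carries when adding $k$ and $n-k$ in base $p$, so $p\mid{n\choose k}$ exactly when this addition carries, equivalently when some base-$p$ digit of $k$ exceeds the corresponding digit of $n$. Thus $p\nmid{n\choose k}$ iff $k$ is ``digit-wise $\le n$'' in base $p$; the set $B_p(n)$ of such $k$ in $[0,n]$ has size $\prod_i (n_i+1)$ where $n=\sum_i n_i p^i$. Condition~(\ref{enu:BinomCond}) with $(p,r)$ is then precisely the statement that $B_p(n)\cap B_r(n)=\{0,n\}$. For the computation one would, for each $n$, take $p$ to be a prime near $\sqrt n$ chosen so that $n$ has a small leading base-$p$ digit (making $|B_p(n)|$ small), enumerate $B_p(n)\setminus\{0,n\}$, and then seek a prime $r$ not dividing any ${n\choose k}$ for $k$ in that set; checking a candidate $r$ against a single $k$ is a fast base-$r$ digit comparison. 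In practice one cycles through small primes $r$ and, for each, tests membership of the (short) list $B_p(n)\setminus\{0,n\}$ in $B_r(n)$; if $p$ itself is chosen well, $|B_p(n)|$ is $O(\sqrt n)$ or better, so the per-$n$ cost is modest and the whole range $n\le 10^9$ is feasible.

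**The main obstacle** is not any single hard case but the need for a genuinely exhaustive and trustworthy computation: one must guarantee that for \emph{every} non-prime-power $n\le 10^9$ some pair $(p,r)$ works, which means handling the ``stubborn'' $n$ for which the obvious choices of $p$ fail. These are exactly the $n$ whose base-$p$ expansions have large digits for many small $p$ simultaneously (morally, $n$ close to a product of consecutive integers, or $n+1$ highly composite); for such $n$ one may need to let $p$ range over a wider band and allow $|B_p(n)|$ to be larger, or to be cleverer about which digit positions of $n$ are small. A secondary concern is reliability of the search code; this is addressed by running two independent implementations, by recording for each $n$ the certifying pair $(p,r)$ and a hash of the set $B_p(n)\setminus\{0,n\}$, and by re-verifying a random sample of certificates with an independent Kummer-theorem check. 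With those safeguards in place the proposition follows from the tabulated output together with Theorem~\ref{thm:EquivUnivGenBinomDiv} and Proposition~\ref{pro:UgenAltPP}.
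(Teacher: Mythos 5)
Your mathematical reduction coincides with the paper's: prime powers are dispatched by Proposition~\ref{pro:UgenAltPP}, and for non-prime-powers Theorem~\ref{thm:EquivUnivGenBinomDiv} reduces Condition~(\ref{enu:UnivGenCond}) to Condition~(\ref{enu:BinomCond}), which via Kummer's theorem (Lemma~\ref{lem:KummerTheorem}) becomes exactly your statement that $B_p(n)\cap B_r(n)=\{0,n\}$. (Your detour through part~(A) of Proposition~\ref{pro:UgenAltPP} plus a hand check of $n=7$ is unnecessary: part~(B) already covers every prime power, including $7$.) Where you genuinely diverge is in the search strategy. The paper's workhorse is the Sieve Lemma, Lemma~\ref{lem:SgdivSieve}~(A): if $p^{a}\mid n$ and $r^{b}<n<r^{b}+p^{a}$, then $n$ satisfies Condition~(\ref{enu:UnivGenCond}) with $p$ and $r$; comparing the largest prime power dividing $n$ with the largest prime power below $n$ costs essentially nothing per $n$ and settles all but $14\comma638$ of the integers up to $10^{9}$. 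Only those survivors get a direct check of intransitive and imprimitive indices via Lemmas~\ref{lem:KummerTheorem} and \ref{lem:ImprimitiveDivIndex}, and $22$ stubborn values need the further search recorded in Table~\ref{tab:ExceptionsToLargePP}. You instead propose a per-$n$ enumeration of $B_p(n)$ for a well-chosen $p$, which is valid in principle, but your efficiency estimate is off: a small \emph{leading} base-$p$ digit of $n$ leaves the lower digits unconstrained, so for $p\approx\sqrt n$ one generically has $\left|B_p(n)\right|=\prod_i(n_i+1)$ of order $p^{2}\approx n$, not $O(\sqrt n)$; getting a short list requires some \emph{low-order} digits to be small as well (e.g.\ a large prime power dividing $n$, or $n\bmod p$ tiny), which is precisely the structure the paper's sieve exploits without any enumeration at all. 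So your route would constitute a correct verification if actually executed, but without the sieve (or an equally cheap filter) the full range $n\leq10^{9}$ is a much heavier computation than the paper's two-week run, and the feasibility argument as you state it does not hold up.
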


The key tool in the proofs of both Theorem~\ref{thm:AsymptoticDensity}
and Proposition~\ref{prop:MainComputation} is the following sieve
lemma.
\begin{lem}[Sieve Lemma]
\emph{ \label{lem:SgdivSieve}} Let $n\geq9$ be an integer. Let
$p$ and $r$ be primes, and let $a$ and $b$ be positive integers.

\begin{enumerate}
\item[(A)] \setlabel{A}\label{enu:SgSieveUgen}If $n$ is not a prime power,
$p^{a}$ divides $n$, and $r^{b}<n<r^{b}+p^{a}$, then $n$ satisfies
Condition~(\ref{enu:UnivGenCond}) with $p$ and $r$.
\item[(B)] \setlabel{B}\label{enu:SgSievePrimeGen}If $p$ divides $n$ and
$r+2<n<r+p$, then $n$ satisfies Condition~(\ref{enu:PrimeConjGenCond})
with $p$ and $r$.
\end{enumerate}
\end{lem}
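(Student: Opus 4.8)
The plan is to handle parts (A) and (B) separately. For part~(A), since $n$ is not a prime power I would first invoke Theorem~\ref{thm:EquivUnivGenBinomDiv} to replace the target Condition~(\ref{enu:UnivGenCond}) by the divisibility Condition~(\ref{enu:BinomCond}): it suffices to show that every $\binom nk$ with $1\le k\le n-1$ is divisible by $p$ or by $r$. The tool is Kummer's theorem \cite{Kummer:1852}, run in both bases at once. Fix $k$ with $p\nmid\binom nk$. Then there are no carries on adding $k$ and $n-k$ in base $p$, so each base-$p$ digit of $k$ is at most the corresponding digit of $n$; since $p^a\mid n$ kills the last $a$ base-$p$ digits of $n$, those of $k$ and of $n-k$ vanish, so $p^a$ divides both $k$ and $n-k$, and as $1\le k\le n-1$ this forces $p^a\le k$ and $p^a\le n-k$. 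Feeding in $n<r^b+p^a$ gives $k<r^b$ and $n-k<r^b$, so $k$ and $n-k$ have all base-$r$ digits below position $b$; were there no carry on adding them in base $r$, the sum $n$ would be less than $r^b$, contradicting $n>r^b$. Hence a carry occurs and Kummer's theorem gives $r\mid\binom nk$. Part~(A) follows; once the reformulation via Theorem~\ref{thm:EquivUnivGenBinomDiv} is in place this is routine.

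For part~(B) the plan is to fix a pair of conjugacy classes, prove the subgroup they generate is transitive and primitive, and finish with Jordan's theorem (a primitive permutation group of degree $n$ containing a cycle of prime length at most $n-3$ contains $A_n$). Write $m=n-r$, so the hypotheses read $3\le m<p$; note $p$ is odd, since $p=2$ would make $r+2<n<r+2$. Suppose first that $r$ is odd. Let $C$ be the class of fixed-point-free elements of order $p$ (cycle type $p^{\,n/p}$) and $D$ the class of $r$-cycles (cycle type $r\,1^m$); both consist of even permutations of prime order, so $C$ and $D$ are conjugacy classes of $A_n$ of the required kind. Fix $c\in C$, $d\in D$, and put $G=\langle c,d\rangle$. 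The complement of the support of $d$ has $m<p$ points and therefore contains no orbit of $c$, so the support of $d$ meets every $c$-orbit; since $d$ is a single cycle on its support, the whole support lies in one $G$-orbit, and powers of $c$ then sweep every remaining point of $[n]$ into that orbit. Hence $G$ is transitive.

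Next I would show $G$ is primitive. If $p=n$ then $n$ is prime and transitivity already gives this, so assume $p<n$, whence $p\le n/2$ and so $m<n/2<r$. Let $\mathcal B$ be a block system with block size $\ell$, $1<\ell<n$. A block containing a fixed point of $d$ is $d$-invariant; the non-invariant blocks fall into $d$-orbits of size $r$, so their union is a $d$-invariant set of size at least $r\ell\ge 2r$ on which $d$ has no fixed point, impossible since $d$ moves only $r$ points. Thus every block is $d$-invariant, hence (as $d$ is a single cycle on its support) each block either contains the support of $d$ or is disjoint from it. Exactly one block $B_0$ contains the support of $d$, so $\ell\ge r$; every other block lies in the $m$-point complement of the support and so has size at most $m<r\le\ell$, which is absurd. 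Hence there is no other block, $\ell=n$, a final contradiction, and $G$ is primitive.

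Now $G$ is primitive of degree $n$ and contains the $r$-cycle $d$, and $r\le n-3$ precisely because $r+2<n$; by Jordan's theorem $A_n\le G$, and since $c,d$ are even, $G=A_n$. It remains to treat $r=2$, which forces $n=p$, so $n\ge 9$ is prime; there I would keep $C$ the class of $n$-cycles but take $D$ to be the class of elements of cycle type $2^2\,1^{n-4}$ (even, of order $2$). Then $G$ contains an $n$-cycle, hence is transitive and, the degree being prime, primitive, and it contains a nonidentity element fixing $n-4\ge 7$ points; by the classification of primitive permutation groups of prime degree the only such groups are $A_n$ and $S_n$, so $G=A_n$ again. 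I expect the block argument for primitivity in part~(B) to be the only substantive step: once it is done, the hypothesis $r+2<n$ is exactly calibrated to feed Jordan's theorem, and the only real annoyance is the degenerate case $r=2$, handled separately as above.
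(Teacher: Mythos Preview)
Your proof is correct. Part~(A) is the paper's argument (Lemma~\ref{lem:SieveIntransitive} followed by Theorem~\ref{thm:EquivUnivGenBinomDiv}) organized contrapositively, with the same use of Kummer in both bases.

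For Part~(B) you choose the same conjugacy classes as the paper (fixed-point-free elements of order $p$ and $r$-cycles) and prove transitivity by the same pigeonhole on $c$-orbits versus the support of $d$. The difference is in how imprimitivity is excluded. The paper works at the level of maximal subgroups: it invokes Corollary~\ref{cor:ImprimitiveDivLargepp} to show that $r$ divides the index of every imprimitive maximal subgroup of $A_n$, so no such subgroup can contain the $r$-cycle. You instead give a direct block-counting argument inside~$[n]$. Your route is more self-contained and avoids the machinery of Section~\ref{sec:Kummers}; the paper's is shorter once that machinery is available. You also handle the boundary case $r=2$ explicitly (where the hypotheses force $n=p$ prime), which the paper's proof skips over, since an ``$r$-cycle in $A_n$'' does not exist for $r=2$. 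One simplification there: you do not need the classification of primitive groups of prime degree, since your $G$ already contains a product of two disjoint transpositions and part~(2) of Theorem~\ref{thm:JordanPrimitiveA_n} finishes the job directly.
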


Theorem~\ref{thm:AsymptoticDensity} follows from combining Lemma~\ref{lem:SgdivSieve}
with known results on prime gaps and smooth numbers. We also use Lemma~\ref{lem:SgdivSieve}
to do much of the work in verifying Proposition~\ref{prop:MainComputation}. 

For those integers not handled by Lemma~\ref{lem:SgdivSieve}~(A),
Theorem~\ref{thm:EquivUnivGenBinomDiv} tells us that it suffices
to check divisibility of binomial coefficients. In particular, we
can avoid making any computations in large alternating groups. We
do not know how to avoid such computations for Condition~(\ref{enu:PPConjGenCond}).
The slow speed of these computations is the main obstacle to a computational
verification of Condition~(\ref{enu:PPConjGenCond}) for those values
of $n$ not addressed by Lemma~\ref{lem:SgdivSieve}.

\subsection{\label{subsec:p=00003D2}Results for $p=2$}

We return now to the case where one of the primes in Condition~(\ref{enu:UnivGenCond})
is $2$. Theorem~\ref{thm:NonaltGroupsUnivgen} suggests this case
as being particularly worthy of attention, and Proposition~\ref{pro:UgenAltPP}
gives infinitely many values of $n$ for which Condition~(\ref{enu:UnivGenCond})
holds with $2$.

However, there are also infinitely many positive integers $n$ that
do not even satisfy Condition~(\ref{enu:BinomCond}) with $2$. By
a theorem of Kummer (see Lemma~\ref{lem:KummerTheorem} below), if
$n=2^{a}-1$ for some positive integer $a$, then $\binom{n}{k}$
is odd for all $1\leq k\leq n-1$. (Indeed, a similar statement holds
for any prime $p$. In the language of group-actions, this says that
any Sylow $p$-subgroup of $S_{p^{a}-1}$ stabilizes a set of every
possible size $k$ with $1<k<p^{a}-1$.) Kummer's Theorem also implies
that there is no prime dividing every nontrivial $\binom{n}{k}$ unless
$n$ is a prime power. There are infinitely many $n$ of the form
$2^{a}-1$ that are not prime powers.

Using techniques similar to those for Proposition~\ref{prop:MainComputation},
we computationally verify the following.
\begin{prop}
\label{prop:2Computation}About 86.7\% of the positive integers $n\leq1\comma000\comma000$
satisfy Condition~(\ref{enu:UnivGenCond}) with $2$.
\end{prop}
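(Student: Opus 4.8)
The plan is to reduce the assertion to the binomial‑divisibility criterion of Theorem~\ref{thm:EquivUnivGenBinomDiv}, to re‑express divisibility through base‑$r$ digit expansions via Kummer's theorem, and then to run the resulting test for every $n\le 1\comma000\comma000$.

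Prime powers are handled immediately: by Proposition~\ref{pro:UgenAltPP}~(A) every prime power $n\neq 7$ satisfies Condition~(\ref{enu:PPOverSylConjGenCond}) with a Sylow $2$-subgroup, hence Condition~(\ref{enu:UnivGenCond}) with $2$ by~(\ref{eq:ConditionsImplications}), while $n=7$ does not satisfy Condition~(\ref{enu:UnivGenCond}) with $2$, as noted after Proposition~\ref{pro:UgenAltPP}; so exactly one prime power up to $1\comma000\comma000$ is not counted. For $n$ not a prime power, Theorem~\ref{thm:EquivUnivGenBinomDiv} says that $n$ satisfies Condition~(\ref{enu:UnivGenCond}) with $2$ exactly when some prime $r$ divides $\binom{n}{k}$ for every $k$ with $0<k<n$ and $\binom{n}{k}$ odd, i.e.\ exactly when
\[
g(n):=\gcd\Bigl\{\binom{n}{k}\ :\ 0<k<n,\ \binom{n}{k}\text{ odd}\Bigr\}>1.
\]
By Kummer's theorem (Lemma~\ref{lem:KummerTheorem}), $\binom{n}{k}$ is odd exactly when each binary digit of $k$ is at most the corresponding digit of $n$, while for a prime $r$ we have $r\mid\binom{n}{k}$ exactly when adding $k$ and $n-k$ in base $r$ produces a carry. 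So the set of relevant $k$ and the valuations $v_r\bigl(\binom{n}{k}\bigr)$ are extracted from digit expansions, and the (typically enormous) binomial coefficients themselves never need to be formed.

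The computation then runs over $n\le 1\comma000\comma000$. Prime powers are counted as above. For the other $n$, first apply the positive test of Lemma~\ref{lem:SgdivSieve}~(A) with $p=2$, valid for $n\ge 9$ and useful mainly when $4\mid n$; if it succeeds, count $n$. Otherwise evaluate $g(n)$, which is inexpensive in most cases. Let $2^{e}$ be the exact power of $2$ dividing $n$. As $n$ is not a prime power it has binary weight at least $2$, so $k=2^{e}$ is a legitimate submask and $g(n)\mid\binom{n}{2^{e}}$ (in fact $\binom{n}{2^{e}}$ is the smallest of the odd binomial coefficients, hence the most tractable). When $n$ is odd this reads $g(n)\mid n$; when $n\equiv 2\pmod 4$ it reads $g(n)\mid\binom{n}{2}$, a number well below $2^{63}$; either way one need only test the few prime divisors of a single, easily factored integer against the remaining submasks. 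When $n$ has binary weight exactly $2$ the two nontrivial submasks give the same coefficient $\binom{n}{2^{e}}>1$, so $g(n)>1$ and $n$ is counted outright. In the cases that remain --- chiefly $n$ divisible by a large power of $2$ with binary weight at least $3$, where $\binom{n}{2^{e}}$ is too large to factor directly --- one enumerates the at most $2^{20}$ binary submasks of $n$ and maintains the set of candidate primes $r$ for which no carry-free submask has yet occurred: this set begins as the primes at most $n$, is pruned at once by the remark that any candidate $r>2^{e}$ satisfies $n\bmod r<2^{e}$, and collapses quickly as more submasks are imposed, so $g(n)>1$ holds if and only if the set stays nonempty. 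Tallying the $n$ that pass gives the stated proportion, at a cost well below that of Proposition~\ref{prop:MainComputation}.

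The difficulty here is not conceptual --- everything follows once Theorem~\ref{thm:EquivUnivGenBinomDiv} and Kummer's theorem are available --- but organizational: one must arrange the search so that the innermost loop over triples $(n,r,k)$ stays small, which is exactly the role of the Kummer reformulation, the Sieve Lemma pre-test, and the pruning of candidate primes; and one must make a purely numerical claim trustworthy, for which the bookkeeping of Proposition~\ref{prop:MainComputation} is reused and the final count is reproduced by an independent implementation. As a sanity check, the non-prime-powers of the form $2^{a}-1$ (namely $15,63,255,\dots$) are seen by hand not to be counted, since for such $n$ every nontrivial $\binom{n}{k}$ is odd and no prime divides them all, again by Kummer's theorem.
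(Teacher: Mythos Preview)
Your proposal is correct and follows essentially the same route as the paper: both are computational verifications that reduce the question to Condition~(\ref{enu:BinomCond}) via Theorem~\ref{thm:EquivUnivGenBinomDiv}, use Kummer's theorem to test divisibility without forming the binomial coefficients, and pre-filter with the Sieve Lemma. The paper's sieve step is slightly broader (it allows $2\in\{p,r\}$, not just $p=2$, catching about $45.7\%$ of $n$), and the paper reports the exact count of $867\comma247$ from the executed run; your write-up describes a sound algorithm with some pleasant extra optimizations (the bound $g(n)\mid\binom{n}{2^{e}}$, the pruning $n\bmod r<2^{e}$, the binary-weight-$2$ shortcut) but does not itself report the outcome of the computation, which is ultimately what the proposition asserts.
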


\subsection{Organization}

We begin in Section~\ref{sec:Preliminaries} by giving necessary
background on maximal subgroups of alternating groups. In Section~\ref{sec:Kummers}
we state the well-known theorem of Kummer on prime divisibility of
binomial coefficients, and prove a analogue on prime divisibility
of the number of equipartitions of a set. We use these results in
Section~\ref{sec:Sieves} to prove Theorem~\ref{thm:EquivUnivGenBinomDiv},
Propositions~\ref{pro:UgenAltPP} and \ref{prop:PrimeGenerationFails},
and Lemma \ref{lem:SgdivSieve}. We also verify that Condition~(\ref{enu:PPConjGenCond})
holds for all small alternating groups. We apply Lemma~\ref{lem:SgdivSieve}
to prove Theorem~\ref{thm:AsymptoticDensity} in Section~\ref{sec:Asymptotic-density}.
We describe our computational verification of Propositions~\ref{prop:MainComputation}
and \ref{prop:2Computation} in Section~\ref{sec:Computation}.

\section*{Acknowledgements}

We thank Andrew Granville and Bob Guralnick for their thoughtful remarks.
A comment by Ben Green led to a significant improvement in the bound
given in Theorem~\ref{thm:AsymptoticDensity}~(A).

\section{\label{sec:Preliminaries}Preliminaries}

In this section we discuss necessary background on alternating and
symmetric groups. Readers familiar with basic facts about permutation
groups can safely skip this section.

In order to show that the index of every subgroup of the alternating
group $A_{n}$ is divisible by either $p$ or $r$, it suffices to
show the same for every maximal subgroup. The maximal subgroups of
$A_{n}$ are well-understood, as we now review. Additional background
can be found in \cite{Dixon/Mortimer:1996}, see also \cite{Liebeck/Praeger/Saxl:1987}.

We say that a subgroup $H\leq A_{n}$ is transitive or\emph{ }primitive
if the action of $H$ on $[n]$ satisfies the same property. That
is, $H$ is \emph{transitive} if for every $i,j\in[n]$, there is
some $\sigma\in H$ such that $i\cdot\sigma=j$. A transitive subgroup
$H$ is \emph{imprimitive} if there is a proper partition $\pi$ of
$[n]$ into sets of size greater than one, such that the parts of
$\pi$ are permuted by the action of $H$. If $H$ is transitive and
not imprimitive, then it is \emph{primitive.} Clearly, every subgroup
is either intransitive, imprimitive, or primitive. We examine maximal
subgroups of $A_{n}$ according to this trichotomy.

An intransitive subgroup $H$ is maximal in the (sub)poset of intransitive
subgroups of $A_{n}$ if and only if $H$ is the stabilizer in $A_{n}$
of some nonempty proper subset $X\subset[n]$. As $A_{n}$ sits naturally
in $S_{n}$, it is illuminating to also consider the stabilizer $H^{+}$
in $S_{n}$ of $X$. Then $H=H^{+}\cap A_{n}$. It is clear that $H^{+}\cong S_{\left|X\right|}\times S_{n-\left|X\right|}$.
If $\left|X\right|=k$, then it follows either from this isomorphism
or the Orbit-Stabilizer Theorem that 
\[
[A_{n}:H]=[S_{n}:H^{+}]=\frac{n!}{k!\cdot(n-k)!}={n \choose k}.
\]

Every imprimitive subgroup of $A_{n}$ stabilizes a partition of $[n]$.
It follows easily that a subgroup $H$ is maximal in the (sub)poset
of imprimitive subgroups of $A_{n}$ if and only if $H$ is the stabilizer
of a partition of $[n]$ into $n/d$ parts of size $d$ for some nontrivial
proper divisor $d$ of $n$. As in the intransitive case, we also
consider the stabilizer $H^{+}$ of the same partition in the action
by $S_{n}$. Then $H^{+}$ is isomorphic to the wreath product $S_{d}\wr S_{n/d}$.
Since $H=H^{+}\cap A_{n}$ (and $H^{+}\not\leq A_{n}$), we see that
\[
[A_{n}:H]=[S_{n}:H^{+}]=\frac{n!}{(d!)^{n/d}\cdot(n/d)!}.
\]
 By either the Orbit-Stabilizer Theorem or an elementary counting
argument, $[A_{n}:H]$ counts the number of partitions of $[n]$ into
$n/d$ equal-sized parts.

The index of a primitive proper subgroup of $A_{n}$ is typically
divisible by every prime smaller than $n$. See Theorem~\ref{thm:JordanPrimitiveA_n}
and the discussion following for a precise statement. 

\section{\label{sec:Kummers}Kummer's Theorem and an analogue}

\subsection{Kummer's Theorem}

We make considerable use of the following result of Kummer \cite{Kummer:1852}.
The most useful case of the lemma for us will be that where $a=1$.
See also \cite{Granville:1997} for an overview of related results.
\begin{lem}[{Kummer's Theorem \cite[pp115--116]{Kummer:1852}}]
\label{lem:KummerTheorem} Let $k$ and $n$ be integers with $0\leq k\leq n$.
If $a$ is a positive integer, then $p^{a}$ divides ${n \choose k}$
if and only if at least $a$ carries are needed when adding $k$ and
$n-k$ in base $p$.
\end{lem}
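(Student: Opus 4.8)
The plan is to pin down the exact exponent of $p$ dividing $\binom{n}{k}$ using Legendre's formula for the valuation of a factorial, and then read that exponent off as a count of carries. Writing $v_p$ for the $p$-adic valuation and $s_p(m)$ for the sum of the base-$p$ digits of a nonnegative integer $m$, I would first recall Legendre's formula
\[
v_p(m!) \;=\; \sum_{i \geq 1}\left\lfloor \frac{m}{p^i} \right\rfloor \;=\; \frac{m - s_p(m)}{p-1}.
\]
The first equality is the usual count of the multiples of $p, p^2, \dots$ among $1,2,\dots,m$; the second follows by writing $m=\sum_j d_j p^j$ in base $p$ and using $\lfloor m/p^i\rfloor = \sum_{j \geq i} d_j p^{j-i}$, after which the two sides agree by a routine rearrangement. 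Applying this to $\binom{n}{k} = n!/(k!\,(n-k)!)$ and cancelling gives
\[
v_p\!\binom{n}{k} \;=\; \frac{s_p(k) + s_p(n-k) - s_p(n)}{p-1}.
\]

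Next I would relate the numerator to carries. Let $d_j(\cdot)$ denote the $j$-th base-$p$ digit, and let $\epsilon_j\in\{0,1\}$ be the carry into position $j$ when adding $k$ and $n-k$ in base $p$, so $\epsilon_0=0$; since $k+(n-k)=n$ no carry escapes past the top digit, all but finitely many $\epsilon_j$ vanish, and $c := \sum_{j\geq 0}\epsilon_{j+1}$ is exactly the number of carries. The addition algorithm gives $d_j(k)+d_j(n-k)+\epsilon_j = d_j(n) + p\,\epsilon_{j+1}$ for every $j$; summing this over all $j\geq 0$ and using $\epsilon_0=0$ yields the identity $s_p(k)+s_p(n-k) = s_p(n) + (p-1)c$. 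Substituting into the displayed formula gives $v_p\binom{n}{k}=c$, whence $p^a \mid \binom{n}{k}$ if and only if $c\geq a$, which is the assertion of the lemma.

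There is no genuinely hard step here: the argument is a short chain of standard manipulations. The only point that needs care is the carry identity $s_p(k)+s_p(n-k)=s_p(n)+(p-1)c$ — intuitively, each carry trades a contribution of $p$ at one position for a contribution of $1$ at the next, a net loss of $p-1$ from the total digit sum — and even that is settled cleanly by summing the per-digit relation over all positions as above (an induction peeling off the least significant digit is an alternative). I would therefore expect the write-up to be quite brief, with everything beyond the carry identity being direct substitution.
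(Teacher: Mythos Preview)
Your proof is correct and is the standard modern derivation of Kummer's Theorem via Legendre's formula and the digit-sum/carry identity. However, the paper does not actually give a proof of this lemma: it is stated with a citation to Kummer's original 1852 paper and used as a black box throughout, so there is no in-paper argument to compare against.
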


\subsection{An analogue for the number of equipartitions}

Lemma~\ref{lem:KummerTheorem} completely describes the prime divisibility
of indices of intransitive maximal subgroups of $A_{n}$. Lemma~\ref{lem:ImprimitiveDivIndex}
below provides a weaker but similarly useful characterization regarding
indices of imprimitive subgroups. Throughout this section, if $d$
is a nontrivial proper divisor of the positive integer $n$, then
we will write $I_{n,d}$ for the number of equipartitions of $n$
into parts of size $d$. Thus, 
\[
I_{n,d}=\frac{n!}{(d!)^{n/d}\cdot(n/d)!}.
\]
\begin{lem}
\label{lem:ImprimitiveDivIndex}Let $n$ be a positive integer, $d$
be a nontrivial proper divisor of $n$, and $p$ be a prime. Then
$p$ divides $I_{n,d}$ if and only if 

\begin{enumerate}
\item at least one carry is necessary when adding $n/d$ copies of $d$
in base $p$, and
\item $d$ is not a power of $p$.
\end{enumerate}
\end{lem}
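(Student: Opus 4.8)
The plan is to express $I_{n,d}$ as a ratio of factorials, compute the exponent of $p$ dividing it via Legendre's formula, and then translate the resulting inequality into a statement about carries. Write $v_p(m)$ for the $p$-adic valuation of $m$, and recall Legendre's formula $v_p(m!) = \frac{m - s_p(m)}{p-1}$, where $s_p(m)$ denotes the sum of the base-$p$ digits of $m$. From the formula
\[
I_{n,d} = \frac{n!}{(d!)^{n/d}\cdot (n/d)!}
\]
we obtain
\[
v_p(I_{n,d}) = v_p(n!) - \tfrac{n}{d}\,v_p(d!) - v_p\bigl((n/d)!\bigr)
= \frac{1}{p-1}\Bigl( s_p(n/d) + \tfrac{n}{d}\,s_p(d) - s_p(n) \Bigr).
\]
So $p \mid I_{n,d}$ if and only if $s_p(n/d) + \tfrac{n}{d}\,s_p(d) > s_p(n)$.

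The next step is to interpret the quantity $\tfrac{n}{d}\,s_p(d) + s_p(n/d) - s_p(n)$ combinatorially as a count of carries. I would view this as the carries occurring in an iterated base-$p$ addition performed in two stages: first add together $n/d$ copies of $d$ to form the intermediate sum $n$ (this is exactly the addition referred to in condition~(1)), then there is a trivial "second stage" since the result $n = (n/d)\cdot d$ is already formed. A cleaner route: apply the standard generalization of Kummer's theorem (the "carry" interpretation of Legendre's formula applied to multinomial coefficients), which says that for nonnegative integers $a_1,\dots,a_t$ summing to $m$, the exponent $v_p\binom{m}{a_1,\dots,a_t}$ equals the total number of carries when the $a_i$ are added in base $p$. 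Taking $t = n/d$ and each $a_i = d$, we get that $v_p(I_{n,d})$ equals the number of carries when $n/d$ copies of $d$ are added in base $p$. Hence $p \mid I_{n,d}$ iff at least one such carry occurs — which is condition~(1). It remains to see why condition~(2) is also needed, i.e. why condition~(1) alone is not equivalent: the point is that when $d = p^j$ is a power of $p$, adding $n/d$ copies of $p^j$ in base $p$ produces carries precisely when $n/d \geq p$, but in that case we must double-check — actually when $d=p^j$, $s_p(d)=1$, so the criterion $s_p(n/d) + \tfrac{n}{d} > s_p(n)$ must be re-examined, and one finds $n = p^j \cdot (n/d)$ has $s_p(n) = s_p(n/d)$, forcing $\tfrac{n}{d} \le s_p(n/d)$, which for a positive integer forces $n/d = 1$, contradicting that $d$ is a proper divisor. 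So in fact when $d$ is a power of $p$ the valuation is \emph{always} $0$; the subtlety is that the naive "carry" count when adding copies of $p^j$ can look nonzero if one is not careful about how the intermediate sums are formed — but carrying out the addition digit-position by digit-position shows all carries cancel. This edge case is where I expect the main obstacle to lie, and I would handle it by a direct digit computation separating the two cases $d = p^j$ and $d$ not a power of $p$ rather than trying to force a single uniform argument.

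To assemble the proof I would therefore: (i) establish the valuation formula $v_p(I_{n,d}) = \frac{1}{p-1}\bigl(s_p(n/d) + \tfrac{n}{d}s_p(d) - s_p(n)\bigr)$ from Legendre's formula, noting $I_{n,d}$ is an integer so the expression is a nonnegative integer; (ii) invoke (or reprove in the needed special case) the multi-addend version of Kummer's theorem to identify this with the number of carries when adding $n/d$ copies of $d$ in base $p$, giving the "if" and "only if" with respect to condition~(1); (iii) treat the case $d = p^j$ separately, showing by an explicit digit argument that $v_p(I_{n,d}) = 0$ there — so condition~(1) can "falsely" appear satisfied (there genuinely are carries in the naive addition when $n/d \ge p$) yet $p \nmid I_{n,d}$, which is exactly why condition~(2) must be imposed; and (iv) conversely check that when $d$ is not a power of $p$, some digit of $d$ beyond the units digit (or a units digit exceeding $1$) is nonzero, and adding $n/d \ge 2$ copies of $d$ then provably forces at least one carry, so conditions~(1) and~(2) together are equivalent to $p \mid I_{n,d}$. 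The main delicacy throughout is bookkeeping the carries in a multi-term base-$p$ addition precisely enough to see the cancellation phenomenon in the prime-power case; everything else is routine manipulation of Legendre's formula.
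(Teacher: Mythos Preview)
Your Legendre computation drops a term. From $v_p(n!)-\tfrac{n}{d}v_p(d!)-v_p((n/d)!)$ one obtains
\[
v_p(I_{n,d})=\frac{1}{p-1}\Bigl(\tfrac{n}{d}\,s_p(d)+s_p(n/d)-s_p(n)-\tfrac{n}{d}\Bigr),
\]
not the expression you wrote. Equivalently, the multinomial coefficient you invoke in step~(ii), namely $\binom{n}{d,\dots,d}=n!/(d!)^{n/d}$, is \emph{not} $I_{n,d}$: it equals $I_{n,d}\cdot(n/d)!$. Hence the number of carries when adding $n/d$ copies of $d$ in base~$p$ is $v_p(I_{n,d})+v_p\bigl((n/d)!\bigr)$, not $v_p(I_{n,d})$ alone. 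This is exactly why condition~(2) is present: when $d=p^{j}$ and $n/d\ge p$ there genuinely \emph{are} carries in that addition---no cancellation of carries occurs---but the carry count equals $v_p((n/d)!)$ on the nose, leaving $v_p(I_{n,d})=0$.

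Once the formula is corrected, your outline no longer closes. For the direction ``(1) and (2) $\Rightarrow p\mid I_{n,d}$'' you now need that the carry count strictly exceeds $v_p((n/d)!)$, which does not drop out of the digit-sum identity. And your step~(iv) is false as stated: take $p=5$, $d=2$, $n=4$; here $d$ is not a power of $p$, yet $2+2=4$ in base~$5$ with no carry. The paper avoids both difficulties by rewriting $I_{n,d}=\prod_{j=1}^{n/d}\binom{jd-1}{d-1}$, so that the factor $(n/d)!$ has already been absorbed and ordinary Kummer applies to each binomial term; a short case split $n/d<p$ versus $n/d\ge p$ then handles conditions~(1) and~(2) directly.
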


\begin{proof}
It is straightforward to show by elementary arguments that 
\begin{equation}
I_{n,d}=\frac{1}{(n/d)!}\cdot\prod_{j=1}^{n/d}{jd \choose d}=\prod_{j=1}^{n/d}\frac{1}{j}{jd \choose d}=\prod_{j=1}^{n/d}{jd-1 \choose d-1}.\label{eq:WreathIndexProd1}
\end{equation}
 Our strategy is to use Lemma~\ref{lem:KummerTheorem} to examine
divisibility of the terms in these products.\medskip{}

\noindent \emph{Case 1.} $n/d<p$\smallskip{}

In this case $p$ does not divide $(n/d)!$, and the first condition
of the hypothesis implies the second. From (\ref{eq:WreathIndexProd1})
we thus see that $p$ divides $I_{n,d}$ if and only if $p$ divides
${jd \choose d}$ for some $1\leq j\leq n/d$. The claim for this
case then follows from Lemma~\ref{lem:KummerTheorem}. \medskip{}

\noindent \emph{Case 2.} $n/d\geq p$\smallskip{}

In this case a carry is always necessary when adding $n/d$ copies
of $d$, so we need only consider the second condition of the hypothesis. 

If the base $p$ expansion of $d$ has at least 2 nonzero places,
then there are at least 2 carries when adding $d$ to $pd-d$, as
the base $p$ expansion of $pd$ is obtained by shifting that of $d$
to the left by one place. It follows that $p^{2}$ divides ${pd \choose d}$,
hence that $p$ divides ${pd-1 \choose d-1}=\frac{1}{p}{pd \choose d}$.
By (\ref{eq:WreathIndexProd1}), $p$ divides $I_{n,d}$.

Otherwise, we have $d=kp^{a}$ for some $1\leq k<p$. Then the base
$p$ expansion of $(jd-1)-(d-1)=(j-1)kp^{a}$ vanishes below the $a$th
place. Also, the base $p$ expansion of $d-1$ is $(k-1)p^{a}+\sum_{i=0}^{a-1}(p-1)p^{i}$.
As the latter vanishes above the $a$th place, this place is the only
possible location for a carry in adding $d-1$ and $jd-1$. If $k=1$,
then the $a$th place of $d-1$ is $0$, so no carry occurs and for
no $j$ does $p$ divide ${jd-1 \choose d-1}$. If $k>1$, then a
carry occurs at the $a$th place for values of $j$ such that $(j-1)\cdot k\equiv p-1\mod p$.
(Such a $j<n/d$ exists since $\mathbb{Z}/p\mathbb{Z}$ is a field.)
\end{proof}
\begin{rem}
After submission of the paper, we became aware that a slightly different
(from Lemma~\ref{lem:ImprimitiveDivIndex}) characterization of prime
divisibility of $I_{n,d}$ appears as \cite[Lemma 2]{Thompson:1966}.
\end{rem}

\begin{cor}
\label{cor:ImprimitiveDivLargepp}Let $n$ and $b$ be positive integers
and $r$ be a prime, such that $n/2<r^{b}\leq n$. If $d$ is a nontrivial
proper divisor of $n$ which is not a power of $r$, then $r$ divides
$I_{n,d}$.
\end{cor}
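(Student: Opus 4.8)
\textbf{Proof proposal for Corollary~\ref{cor:ImprimitiveDivLargepp}.}
The plan is to deduce the corollary directly from Lemma~\ref{lem:ImprimitiveDivIndex} by checking its two conditions with the prime $p = r$. The second condition, that $d$ is not a power of $r$, is given to us by hypothesis, so the whole task reduces to verifying the first condition: that at least one carry occurs when adding $n/d$ copies of $d$ in base $r$. I would break into the same two cases as in the proof of the lemma, according to whether $n/d \geq r$ or $n/d < r$.

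If $n/d \geq r$, then (as noted in Case~2 of the lemma's proof) a carry is automatic, since summing $r$ copies of $d \geq 1$ already forces a carry in base $r$; so the first condition holds trivially and we are done. The substantive case is $n/d < r$. Here I would use the hypothesis $n/2 < r^b \leq n$. Since $d$ is a proper divisor, $n/d \geq 2$, hence $d \leq n/2 < r^b$, so the base-$r$ expansion of $d$ has at most $b$ digits (it is supported below the $r^b$ place). On the other hand, $n \geq r^b$, so $n/d \cdot d = n \geq r^b$, meaning the total sum reaches or exceeds $r^b$ and therefore has a nonzero digit in position $b$ or higher. If no carry occurred during the addition of the $n/d$ copies of $d$, the digitwise sum would already be $\leq (n/d)(r-1) \cdot (1 + r + \dots + r^{b-1}) < r \cdot r^b /(\text{something})$ — more cleanly, with no carries the result would be supported strictly below position $b$, contradicting that the sum equals $n \geq r^b$. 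Hence a carry must occur, establishing the first condition.

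The one point requiring a little care — and the only place I expect any friction — is the bookkeeping in the no-carry case $n/d < r$: I must be sure that "no carries" genuinely forces the sum to stay below the $r^b$ place. The clean way to see this: with no carries, the $i$-th digit of the sum is exactly $(n/d)$ times the $i$-th digit of $d$, which is a legitimate base-$r$ digit precisely because $n/d < r$ keeps it in range; so the sum has the same digit-support as $d$, namely below position $b$, forcing the sum to be $< r^b$. But the sum is $n \geq r^b$, a contradiction. So a carry occurs, both conditions of Lemma~\ref{lem:ImprimitiveDivIndex} are met, and $r \mid I_{n,d}$.
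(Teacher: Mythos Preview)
Your proof is correct and follows essentially the same approach as the paper: verify the carry condition of Lemma~\ref{lem:ImprimitiveDivIndex} by observing that $d\le n/2<r^{b}$ forces the base-$r$ expansion of $d$ to vanish at place~$b$, while $n\ge r^{b}$ does not, so a carry must occur. The paper's version is slightly more streamlined in that it does not split into the cases $n/d\ge r$ and $n/d<r$ (the digit-support contradiction you give in the second case already covers both), and your aside that ``$n/d<r$ keeps $(n/d)\cdot d_i$ in range'' is not literally true---but since you are arguing by contradiction from the assumption of no carries, this does not affect the validity of the argument.
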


\begin{proof}
Since $r^{b}>n/2$, there is a 1 in the $b$th place of the base $r$
expansion of $n$. On the other hand, $d\leq n/2$. Hence, the base
$r$ expansion of $d$ has a $0$ in the $b$th place. It follows
that there is at least one carry when we sum $n/d$ copies of $d$.
Lemma~\ref{lem:ImprimitiveDivIndex} then gives that $r$ divides
$I_{n,d}$ unless $d$ is a power of $r$.
\end{proof}
One can indeed extract from (\ref{eq:WreathIndexProd1}) the highest
power of $p$ dividing $I_{n,d}$, but we will not need to do so.

\section{\label{sec:Sieves}Proofs of the Sieve Lemma and other tools}

In this section we prove several results that we will use as tools
in the sections that follow, including Theorem~\ref{thm:EquivUnivGenBinomDiv},
Lemma~\ref{lem:SgdivSieve}, and Propositions~\ref{pro:UgenAltPP}
and \ref{prop:PrimeGenerationFails}.

\subsection{\label{subsec:ProofEquivUgenBinom}Proof of Theorem~\ref{thm:EquivUnivGenBinomDiv}}

Suppose $n$ satisfies Condition~(\ref{enu:UnivGenCond}) with $p$
and $r$. As described in Section~\ref{sec:Preliminaries}, the maximal
intransitive subgroups of $A_{n}$ are stabilizers of $k$-subsets
of $[n]$, and have index ${n \choose k}$ in $A_{n}$. Hence, $n$
also satisfies Condition~(\ref{enu:BinomCond}) with $p$ and $r$.
See the discussion following (\ref{eq:ConditionsImplications}). 

Thus, in order to prove Theorem~\ref{thm:EquivUnivGenBinomDiv},
it suffices to show that if $n$ satisfies Condition~(\ref{enu:BinomCond})
with $p$ and $r$, then the index of every primitive or imprimitive
maximal subgroup is divisible by at least one of $p$ or $r$. 

For the primitive case, we use the following version of a classic
theorem of permutation group theory due to Jordan. 
\begin{thm}
\emph{(Jordan} \cite{Jordan:1875}\emph{, see also \cite[Section 3.3]{Dixon/Mortimer:1996})}
\emph{\label{thm:JordanPrimitiveA_n}} Let $n\geq9$, and let $H$
be a primitive subgroup of $A_{n}$.

\begin{enumerate}
\item If $p\leq n-3$ is a prime, and $H$ contains a $p$-cycle, then $H=A_{n}$.
\item If $H$ contains the product of two transpositions, then $H=A_{n}$.
\end{enumerate}
\end{thm}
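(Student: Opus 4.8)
The plan is to treat this as the classical theorem of Jordan that it is, deferring to \cite[Section~3.3]{Dixon/Mortimer:1996} (and to \cite{Jordan:1875}) for the full details; but it is worth recording the shape of the argument. The unifying idea in both parts is that a primitive group containing an element of very small support must already be all of $A_n$. The mechanism is this: the set of all elements of $H$ of a prescribed cycle type is a union of $H$-conjugacy classes, so the subgroup it generates is a nontrivial normal subgroup of $H$, hence transitive on $[n]$ by primitivity; one then leverages primitivity again to pin this normal subgroup down as $A_n$.

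For part (1) I would begin with the case $p=3$. Let $N\le H$ be generated by all $3$-cycles of $H$, so that $N\normalin H$ and $N$ is transitive. Call $i,j\in[n]$ equivalent when they lie in a chain of $3$-cycles of $H$ whose consecutive supports overlap; this is an $H$-invariant equivalence relation all of whose classes have size at least three, so by primitivity there is a single class, namely $[n]$. It is then standard that a ``connected'' family of $3$-cycles lying in a group generates the full alternating group on the union of their supports --- one checks this by computing products such as $(a\,b\,c)(a\,b\,d)^2=(b\,c\,d)$, which let one move between overlapping triples --- so $N=A_n$. For a general prime $p\le n-3$, a $p$-cycle $\sigma\in H$ makes $\operatorname{supp}(\sigma)$ a Jordan set of size $p$ with complement of size at least $3$; primitivity supplies a conjugate $\sigma^g$ whose support meets $\operatorname{supp}(\sigma)$ in a nonempty proper subset (otherwise the $H$-translates of $\operatorname{supp}(\sigma)$ would form a nontrivial block system), and the standard enlargement argument for Jordan sets, carried out in the literature, reduces the general prime case to the $3$-cycle case above.

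For part (2) I would run the parallel reduction: if $H$ contains double transpositions $(a\,b)(c\,d)$ and $(a\,b)(c\,e)$ with $a,b,c,d,e$ distinct, then $(a\,b)(c\,d)(a\,b)(c\,e)=(c\,d\,e)$ is a $3$-cycle, and part (1) finishes the job. Primitivity together with $n\ge9$ is exactly what guarantees the existence of conjugates overlapping in this pattern, and the hypothesis $n\ge9$ is genuinely necessary: there are small primitive groups containing a double transposition but not $A_n$, such as $\operatorname{PGL}_2(5)\cong S_5$ acting on the projective line over $\mathbb{F}_5$, which is primitive of degree $6$. I expect the one real obstacle to be making this reduction go through for small $n$, since uniform bounds on the minimal degree of a primitive group (Bochert's, say) are far too weak to help near $n=9$; this forces the modest case analysis found in the literature.
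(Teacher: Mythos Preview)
The paper does not prove this theorem at all: it is stated with attribution to Jordan and a pointer to \cite[Section~3.3]{Dixon/Mortimer:1996}, and then used as a black box. Your decision to defer to those references is therefore exactly what the paper does, and the sketch you add on top is a bonus rather than a requirement. The shape of your sketch (normal closure of short-support elements is transitive; overlapping $3$-cycles manufacture all $3$-cycles; Jordan-set enlargement reduces a $p$-cycle with $p\le n-3$ to the $3$-cycle case; overlapping double transpositions produce a $3$-cycle) matches the standard treatment in Dixon--Mortimer.

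One small correction in your illustrative counterexample for part~(2): $\operatorname{PGL}_2(5)$ in its degree-$6$ action is not contained in $A_6$, since for instance $x\mapsto 2x$ acts as the $4$-cycle $(1\,2\,4\,3)$. The example you want is $\operatorname{PSL}_2(5)\cong A_5$, which does embed in $A_6$, is primitive (indeed $2$-transitive) of degree $6$, and contains the double transposition $x\mapsto (4x)^{-1}$, acting as $(0\,\infty)(1\,4)$. This shows the bound $n\ge 9$ (or at least $n>6$) in part~(2) cannot simply be dropped, as you intended.
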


The next lemma follows quickly.
\begin{lem}
\label{lem:PrimitiveDivIndex} Let $p$ be a prime. If $n\geq9$ and
$p\leq n-3$, then $p$ divides the index of every primitive proper
subgroup of $A_{n}$.
\end{lem}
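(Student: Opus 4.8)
The plan is to deduce Lemma~\ref{lem:PrimitiveDivIndex} directly from Jordan's theorem (Theorem~\ref{thm:JordanPrimitiveA_n}) by a counting argument on the Sylow $p$-subgroups. Let $H < A_n$ be a primitive proper subgroup, with $n \geq 9$ and $p \leq n-3$. I want to show $p \mid [A_n : H]$, equivalently that $|H|_p < |A_n|_p$, i.e.\ a Sylow $p$-subgroup of $A_n$ is strictly larger than one of $H$.

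First I would observe that since $p \leq n-3 < n$, the group $A_n$ contains a $p$-cycle (as $p \leq n$, and a single $p$-cycle is an even permutation since $p \geq 3$; note $p=2$ is impossible here because $n \geq 9$ forces $p \leq n-3$ but a prime $p=2$ would still need handling --- actually $2 \leq n-3$ holds, so I should address $p=2$ separately using part (2) of Jordan, since $A_n$ contains a product of two transpositions). For odd $p$: a Sylow $p$-subgroup $P$ of $A_n$ has order divisible by $p$, and in fact $A_n$ contains elements of order $p$ that are single $p$-cycles. The key step: if $p \mid |H|$, then $H$ contains an element of order $p$; I claim some power of it, or rather Cauchy's theorem gives an element $x \in H$ of order $p$, but $x$ need not be a single $p$-cycle. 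So instead I would argue as follows: suppose for contradiction that $p \mid |H|$. Then $H$ contains a subgroup of order $p$, generated by some $x$ of order $p$. Since $p \leq n-3$, by counting supports one sees $x$ has at most $\lfloor n/p \rfloor$ cycles of length $p$; this does not immediately give a $p$-cycle. The cleaner route is: a Sylow $p$-subgroup $P$ of $A_n$ has, in its natural action on $[n]$, an orbit of size exactly $p$ (since $p \leq n$ and the largest power of $p$ up to $n$ still leaves a fixed point configuration allowing a block of size $p$). Then the generator of the stabilizer structure... Let me restructure.

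The clean argument: assume $p \mid [A_n:H]$ fails, so $|A_n|_p = |H|_p$, meaning some Sylow $p$-subgroup $P$ of $A_n$ is contained in $H$. Now I exhibit inside $P$ an element that Jordan's theorem forbids. Since $p \leq n-3$, write $n = qp + s$ with $0 \leq s < p$; because $p \leq n-3$ we have $q \geq 1$ and $q + s \geq 3$ unless... one checks the arithmetic so that a Sylow $p$-subgroup of $S_n$ (hence the relevant part of one in $A_n$) contains a single $p$-cycle: indeed the Sylow $p$-subgroup of $S_{qp}$ acting on the first $qp$ points contains, as a subgroup of its base group, a single $p$-cycle on the first block. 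For $p$ odd this $p$-cycle is an even permutation, so lies in a Sylow $p$-subgroup of $A_n$. Thus $P$, and hence $H$, contains a $p$-cycle; since $p \leq n-3$ and $H$ is primitive, Theorem~\ref{thm:JordanPrimitiveA_n}(1) gives $H = A_n$, a contradiction. For $p = 2$: a Sylow $2$-subgroup of $A_n$ with $n \geq 9$ contains a product of two disjoint transpositions (e.g.\ $(1\,2)(3\,4)$ together with enough further structure lies in some Sylow $2$-subgroup), so by Theorem~\ref{thm:JordanPrimitiveA_n}(2), $H = A_n$, again a contradiction.

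The main obstacle I anticipate is the bookkeeping showing that a Sylow $p$-subgroup of $A_n$ genuinely contains a single $p$-cycle (resp.\ a double transposition) under the hypothesis $p \leq n-3$, $n \geq 9$ --- this is where the precise inequality $p \leq n-3$ gets used, rather than merely $p \leq n$, since one needs the action to leave room for primitivity's incompatibility with small-support elements. In fact, re-examining, the bound $p \leq n-3$ in the hypothesis is exactly what Jordan's theorem part~(1) requires, so the only real content is: (i) $p \mid |H| \Rightarrow$ a Sylow $p$-subgroup of $A_n$ sits in $H$ (trivial by Sylow's theorem up to conjugacy, and index is conjugation-invariant so we may assume it literally), and (ii) every Sylow $p$-subgroup of $A_n$ contains a $p$-cycle when $p$ is odd (resp.\ a double transposition when $p = 2$). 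Step (ii) is the one to get right; everything else is immediate.
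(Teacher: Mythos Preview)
Your approach is correct and essentially identical to the paper's: assume a Sylow $p$-subgroup $P$ of $A_n$ lies in $H$, exhibit a $p$-cycle in $P$ for odd $p$ (resp.\ a double transposition for $p=2$), and invoke Theorem~\ref{thm:JordanPrimitiveA_n} for the contradiction. One small slip in your final recap: item~(i) should have hypothesis $p \nmid [A_n:H]$, not merely $p \mid |H|$, to guarantee a full Sylow $p$-subgroup of $A_n$ sits in $H$ --- but you stated this correctly in your ``clean argument'' paragraph.
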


\begin{proof}
If $p$ is odd, then every Sylow $p$-subgroup of $A_{n}$ contains
a $p$-cycle. Similarly, every Sylow $2$-subgroup of $A_{n}$ contains
an element that is the product of two transpositions. In either case,
Theorem~\ref{thm:JordanPrimitiveA_n} gives that no primitive proper
subgroup of $A_{n}$ contains any Sylow $p$-subgroup of $A_{n}$.
\end{proof}
Since Lemma~\ref{lem:PrimitiveDivIndex} only applies when $n\geq9$,
we pause to handle the situation when $n<9$. The only integer less
than $9$ that is not a prime power is $6$, and the equivalence of
Conditions~(\ref{enu:BinomCond}) and (\ref{enu:UnivGenCond}) for
$n=6$ is obtained by direct inspection (see Table~\ref{tab:IndicesInSmallA_n}
below). 

Now assume as above that $n\geq9$ satisfies Condition~(\ref{enu:BinomCond})
with $p$ and $r$. Since $n$ is not a prime power, we see from Lemma~\ref{lem:KummerTheorem}
that $p$ and $r$ must be distinct, hence one must be smaller than
$n-2$. As $n\geq9$, it follows from Lemma~\ref{lem:PrimitiveDivIndex}
that the index of every primitive proper subgroup is divisible by
at least one of $p$ or $r$, as desired.

We now handle the imprimitive case, using Lemma~\ref{lem:ImprimitiveDivIndex}.
Let $d$ be a divisor of $n$. We notice that if $p$ divides ${n \choose d}$,
then adding $n-d$ and $d$ in base $p$ requires a carry (by Lemma~\ref{lem:KummerTheorem}).
It follows immediately from Lemma~\ref{lem:ImprimitiveDivIndex}
that the index $\frac{n!}{(d!)^{n/d}\cdot(n/d)!}$ of an imprimitive
maximal subgroup is divisible by either $p$ or $r$, except possibly
if $d$ is a power of $p$ or $r$. 

Suppose that $d$ is a power of $p$, and that $p^{a}$ is the highest
power of $p$ dividing $n$. Then Lemma~\ref{lem:KummerTheorem}
shows that ${n \choose p^{a}}$ is not divisible by $p$, hence it
is divisible by $r$. Adding $n/p^{a}$ copies of $p^{a}$ in base
$r$ therefore requires a carry. Since $d\leq p^{a}$, adding $n/d$
copies of $d$ in base $r$ will also require a carry. Therefore,
$\frac{n!}{(d!)^{n/d}\cdot(n/d)!}$ is divisible by $r$, as desired.
The case where $d$ is a power of $r$ is handled similarly.

\subsection{\label{subsec:ProofHardSgsieve}Proof of Lemma~\ref{lem:SgdivSieve}~(\ref{enu:SgSieveUgen})}

Kummer's Theorem (Lemma~\ref{lem:KummerTheorem}) gives us the following.
\begin{lem}
\label{lem:SieveIntransitive} Let $n$ be a positive integer, and
let $p$ and $r$ be distinct primes. If there are positive integers
$a$ and $b$ such that $p^{a}\mid n$ and $r^{b}<n<p^{a}+r^{b}$,
then for $0<k<n$ at least one of $p,r$ divides ${n \choose k}$. 
\end{lem}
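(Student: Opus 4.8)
The plan is to use Kummer's Theorem (Lemma~\ref{lem:KummerTheorem} with $a=1$) to reduce the statement to a claim about base-$p$ and base-$r$ digit expansions near $n$. Recall that $p$ divides $\binom{n}{k}$ exactly when adding $k$ and $n-k$ in base $p$ requires at least one carry, equivalently when some base-$p$ digit of $k$ exceeds the corresponding digit of $n$ (i.e.\ $k$ is \emph{not} ``digitwise $\leq n$'' in base $p$). So it suffices to show: if $0<k<n$ and $k$ is digitwise $\leq n$ in base $p$, then $k$ is not digitwise $\leq n$ in base $r$ — or the symmetric conclusion — under the hypotheses $p^a\mid n$ and $r^b<n<p^a+r^b$.

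First I would exploit $p^a\mid n$: the bottom $a$ base-$p$ digits of $n$ are all zero, so if $k$ is digitwise $\leq n$ in base $p$ then the bottom $a$ base-$p$ digits of $k$ vanish too, forcing $p^a\mid k$. Write $k = p^a \cdot k'$ with $1 \le k' $ (using $k>0$), and note $k < n$ gives $p^a k' < n < p^a + r^b$, so $p^a k' \le n - p^a < r^b$, hence $k \le n - p^a$. Next I would bring in the prime $r$ and the gap condition $r^b < n$: the base-$r$ expansion of $n$ has a nonzero digit at position $\ge b$ (since $n \ge r^b$), and I want to compare with $k \le n - p^a$. The key point is that $n$ and $n - p^a$ straddle the ``carry structure'' at level $r^b$ in a controlled way because $p^a > n - r^b$, i.e.\ $n - p^a < r^b \le n$. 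Thus the base-$r$ digits of $n$ at positions $\ge b$ encode a value $\ge r^b$, while $k \le n - p^a < r^b$ means $k$ has \emph{all} its base-$r$ digits at positions $\ge b$ equal to zero. I would then argue that since $k \le n - p^a$ but $n - p^a < r^b$, either $k$ is already not digitwise $\le n$ in base $r$ (which finishes the proof for that $k$), or else — using that $n$'s digit at the relevant high position forces a borrow when we compute $n-k$ versus $n$ — there is a carry in base $r$. Concretely: if $k \leq n$ is digitwise $\leq n$ in base $r$ then $n - k$ is also digitwise $\le n$ and $(n-k) + k = n$ carry-free; but $n - k \geq n - (n - p^a) = p^a$... the cleanest route is to show directly that $k$ digitwise $\le n$ in base $p$ \emph{and} $k$ digitwise $\le n$ in base $r$ together force $k = 0$, contradicting $0 < k$.

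To make that last step precise, the cleanest formulation: suppose for contradiction that $\binom{n}{k}$ is divisible by neither $p$ nor $r$, so $k$ is digitwise $\le n$ in both bases. From base $p$ we derived $p^a \mid k$, so $k \ge p^a$ (as $k>0$). From base $r$: since $r^b < n < r^b + p^a$, we have $n - r^b < p^a \le k$, so $k > n - r^b$, meaning $k \ge n - r^b + 1$ — equivalently $n - k \le r^b - 1 < r^b$. Now $n - k$ is also digitwise $\le n$ in base $r$ (carry-free subtraction), so $n-k$ has zero digits in base-$r$ positions $\ge b$; but then $n = k + (n-k)$ computed in base $r$: the high part (positions $\ge b$) of $n$ equals the high part of $k$ (since $n-k$ contributes nothing there and there is no carry), and the high part of $n$ has value $\ge r^b$ while... hmm, this still needs $k < r^b$ or a carry. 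The resolution: from base $p$, $p^a \mid k$ and $k < n < r^b + p^a$ give $k \le n - p^a$ only if $k < n$; combined with $k > n - r^b$ we get $n - r^b < k \le n - p^a$, which is nonempty exactly when $p^a < r^b$. So $p^a < r^b$, and then $k \le n - p^a$ means $k$ could still be $\ge r^b$. The honest main obstacle is precisely pinning down why digitwise-$\le n$ in base $r$ fails: I expect it comes from the fact that $n$, $n - p^a$, $r^b$ are arranged so that $n$ has a strictly larger base-$r$ digit than $k$ at some position $< b$ once we account for $p^a \mid k$; I would handle this by looking at the base-$r$ digit of $p^a$ itself at its lowest nonzero position and tracking how subtracting $p^a$ from $n$ (which lies in $(r^b, r^b + p^a)$, so its high part is exactly ``$r^b$-worth'', i.e.\ digit $1$ at position $b$ and zeros above) forces a borrow down into the low $b$ digits — giving a position where $n$'s low digit dropped below $k$'s. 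This borrow analysis, showing the carry in base $r$, is the heart of the argument and the step I would write out most carefully.
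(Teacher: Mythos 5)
Your setup is right (Kummer with $a=1$, digitwise comparison, and the deduction $p^a\mid k$ from carry-freeness in base $p$), but the proof is not closed: you explicitly leave the decisive step --- producing the carry in base $r$ --- as a ``borrow analysis'' to be written later, and the sketch you give for it is off track. Worse, your stated obstacle is illusory and contradicts your own earlier derivation. You already showed that if there is no base-$p$ carry then $p^a\mid k$, hence $k\le n-p^a$, and the hypothesis $n<p^a+r^b$ gives $n-p^a<r^b$, so $k<r^b$; the sentence ``$k\le n-p^a$ means $k$ could still be $\ge r^b$'' is therefore false, and the whole worry about whether $k<r^b$ evaporates. The one observation you are missing is that carry-freeness is symmetric in $k$ and $n-k$: no base-$p$ carry forces $p^a\mid(n-k)$ as well, so \emph{both} $k$ and $n-k$ lie in $[p^a,\,n-p^a]\subseteq[1,\,r^b-1]$. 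Then both summands have all base-$r$ digits at positions $\ge b$ equal to zero, while $n>r^b$ has a nonzero digit at some position $\ge b$, so adding $k$ and $n-k$ in base $r$ must involve a carry, giving $r\mid\binom{n}{k}$ --- exactly the contradiction you wanted. Without this (or some equivalent) step, the proposal as written does not prove the lemma.

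For comparison, the paper argues directly rather than by contradiction: since $p^a>n-r^b$, every $k$ satisfies $k<p^a$ or $k>n-r^b$, and one may assume $k\le n/2$ because $\binom{n}{k}=\binom{n}{n-k}$. If $k<p^a$, all nonzero base-$p$ digits of $k$ sit in positions where $n$ (divisible by $p^a$) has zeros, forcing a base-$p$ carry; if $k>n-r^b$, then $k\le n/2<r^b$, so both $k$ and $n-k$ are below $r^b$ while the $b$th base-$r$ digit of $n$ is $1$ (as $n/2<r^b<n$), forcing a base-$r$ carry. Your contradiction framing can be made to work with the fix above, but note that the paper's WLOG $k\le n/2$ and the explicit dichotomy do the same job more cleanly and avoid the place where your argument stalled.
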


\begin{proof}
Notice that since $p^{a}>n-r^{b}$, either $k<p^{a}$ or else $k>n-r^{b}$.
We assume without loss of generality that $k\leq n/2$.

Let $k=\sum k_{i}p^{i}$ and $n=\sum n_{i}p^{i}$ respectively be
the base $p$ expansions of $k$ and $n$. As $p^{a}\mid n$, therefore
$n_{i}=0$ for $i<a$.  When $k<p^{a}$, then $k_{j}=0$ for all $j\geq a$.
Since $k\neq0$, there is a carry when adding $k$ and $n-k$ in base
$p$. It follows from Lemma~\ref{lem:KummerTheorem} that $p\mid{n \choose k}$. 

When $k>n-r^{b}$, we notice that $k\leq n/2<r^{b}$, and therefore
both $k$ and $n-k$ are between $n-r^{b}$ and $r^{b}$. In particular,
the $b$th place of the base $r$ expansion of both $k$ and $n-k$
has a 0. Since $n/2<r^{b}<n$, the $b$th place of the base $r$ expansion
of $n$ has a $1$. It follows that there is a carry when adding $k$
and $n-k$, hence by Lemma~\ref{lem:KummerTheorem} that $r\mid{n \choose k}$.
\end{proof}
Lemma~\ref{lem:SgdivSieve}~(\ref{enu:SgSieveUgen}) follows from
Lemma~\ref{lem:SieveIntransitive} and Theorem~\ref{thm:EquivUnivGenBinomDiv}.

\subsection{Proof of Lemma~\ref{lem:SgdivSieve}~(\ref{enu:SgSievePrimeGen})}

Let $x\in A_{n}$ have cycle type $p^{n/p}$, that is, let $x$ be
the product of $n/p$ pairwise disjoint $p$-cycles. (Since $p\neq2$,
a $p$-cycle is an even permutation.) Let $y\in A_{n}$ be an $r$-cycle.
We take $C$ to be the conjugacy class containing $x$, and $D$ to
be the conjugacy class containing $y$. Since we chose $(x,y)$ arbitrarily
from $C\times D$, it is enough to show $\left\langle x,y\right\rangle =A_{n}$,
that is, that $\left\langle x,y\right\rangle $ is not contained in
a maximal subgroup of any of the three types discussed in Section~\ref{sec:Preliminaries}.

Since $r<n-2$, it is immediate from Theorem~\ref{thm:JordanPrimitiveA_n}
that $\left\langle y\right\rangle $ is contained in no maximal primitive
subgroup. 

If $p$ is a proper divisor of $n$, we see that $p\leq n/2$ and
hence that $r>n-p\geq n/2$. It is then immediate by Corollary~\ref{cor:ImprimitiveDivLargepp}
that $\left\langle y\right\rangle $ is contained in no imprimitive
maximal subgroup. Otherwise, if $n=p$, then $A_{n}$ has no imprimitive
maximal subgroups.

It remains to show that $\left\langle x,y\right\rangle $ is transitive
in the natural action on $[n]$. Since $y$ acts transitively on an
$r$-set $Y\subseteq[n]$, it suffices to show that every $i\in[n]$
can be moved into $Y$ by $x$. But $i$ is permuted in a $p$-cycle
by $x$, and since $r+p>n$, some element of this $p$-cycle must
be in $Y$, as desired. 

\subsection{Proof of Proposition~\ref{pro:UgenAltPP}}

Direct inspection verifies the proposition for $n\leq8$. See Table~\ref{tab:IndicesInSmallA_n}
below. We assume henceforth that $n\geq9$.

We first verify part~(B). By the Bertrand-Chebyshev Theorem \cite[Theorem 8.7]{Niven/Zuckerman/Montgomery:1991}
there is a prime $r$ with $n/2<r<n-2$. We let $x$ be any $r$-cycle,
and notice that $\left\langle x\right\rangle $ is a Sylow $r$-subgroup.
Then $r$ divides the index of any imprimitive or primitive maximal
subgroup by Corollary~\ref{cor:ImprimitiveDivLargepp} and Lemma~\ref{lem:PrimitiveDivIndex}
respectively.

We now take $y$ to be any $n$-cycle in the case where $n=p^{a}$
is odd, or the product of any two disjoint $2^{a-1}$-cycles in the
case where $n=2^{a}$ is even. In the former case, $\left\langle y\right\rangle $
is transitive. In the latter case, as $r>2^{a-1}$, we see that $\left\langle x,y\right\rangle $
is transitive. In either case, $\left\langle x,y\right\rangle $ is
contained in no intransitive maximal subgroup, hence $\left\langle x,y\right\rangle =A_{n}$.
Since conjugation fixes cycle type, part~(B) follows.

It remains to verify (A). In the case where $n$ is even, it follows
from part~(B). Otherwise, we take $y$ to be any $n$-cycle. Then
$\left\langle y\right\rangle $ is transitive, while Lemmas~\ref{lem:ImprimitiveDivIndex}
and \ref{lem:PrimitiveDivIndex} give that no imprimitive or primitive
maximal subgroup contains a Sylow $2$-subgroup. It follows that $\left\langle y,P\right\rangle =A_{n}$
for any Sylow $2$-subgroup $P$, completing the proof of part~(A).

\subsection{Proof of Proposition~\ref{prop:PrimeGenerationFails}}

Let $C$ and $D$ be as in Condition~(\ref{enu:PrimeConjGenCond}).
We will find $(c,d)\in C\times D$ such that $\left\langle c,d\right\rangle \neq A_{n}$.

Since $A_{n}$ is transitive, if $D$ does not consist of derangements
then we may find an element $d$ of $D$ fixing $n$. The same holds
for $C$. If $c$ and $d$ both fix $n$, then $\left\langle c,d\right\rangle $
is intransitive, hence a proper subgroup of $A_{n}$. This reduces
us to the situation where one conjugacy class (without loss of generality
$C$) consists of derangements.

Since $n=2^{a}$, derangements of prime order in $A_{n}$ are fixed-point-free
involutions. It is straightforward to verify that the fixed-point-free
involutions of $A_{n}$ form a single conjugacy class. Thus, $C$
consists of all fixed-point-free involutions in $A_{n}$. 

Since a Sylow $2$-subgroup of $A_{n}$ intersects every conjugacy
class of involutions nontrivially, we see that $D$ must consist of
elements of odd prime order $p$. For any $d\in D$, every orbit of
$\left\langle d\right\rangle $ is of size $1$ or $p$. If $\left\langle d\right\rangle $
has more than two orbits, then let $O_{1}$ and $O_{2}$ be orbits.
Now there is some $c\in C$ such that $O_{1}\cup O_{2}$ is the union
of the supports of $2$-cycles in the disjoint cycle decomposition
of $c$. The subgroup $\left\langle c,d\right\rangle $ is thus intransitive.

It remains only to consider the case where $\left\langle d\right\rangle $
has exactly two orbits. As $n=2^{a}$, so $d$ is a $p$-cycle fixing
exactly one point. Now $n=p+1$, and so by the Sylow Theorems the
subgroups of order $p$ in $A_{n}$ form a single conjugacy class.
Thus, it suffices to find a proper subgroup of $A_{n}$ that contains
both a fixed-point-free involution $c$ and an element $d$ of order
$p$.

Consider the transitive action of $PSL_{2}(p)$ on the set $\mathcal{S}$
of $1$-dimensional subspaces of $\mathbb{F}_{p}^{2}$. Since $\left|\mathcal{S}\right|=n$
and $PSL_{2}(p)$ is simple, we obtain from the group action a subgroup
$H\cong PSL_{2}(p)$ of $A_{n}$. Then $\left|H\right|=\frac{p\cdot(p^{2}-1)}{2}$,
and by the Orbit-Stabilizer Theorem, the stabilizer of any point has
order $\frac{p\cdot(p-1)}{2}=p\cdot(2^{a-1}-1)$. In particular, the
subgroup $H$ contains elements of order $p$ and order $2$, and
no element of order $2$ in $H$ fixes any point.
\begin{rem}
Powers of $2$ satisfy Condition~(\ref{enu:PPConjGenCond}) by Proposition~\ref{pro:UgenAltPP}.
\end{rem}

\subsection{\label{subsec:SmallAltGroups}Very small alternating groups}

As Lemma~\ref{lem:SgdivSieve} does not apply when $n\leq8$, we
examine small $n$ separately. The solvable alternating groups (where
$n<5$) all trivially satisfy Condition~(\ref{enu:PrimeConjGenCond}).
For $5\leq n\leq8$, we present in Table~\ref{tab:IndicesInSmallA_n}
the indices of maximal subgroups of $A_{n}$, together with representatives
for generating conjugacy classes as in Condition~(\ref{enu:PPConjGenCond}).
This list is easy to produce either by GAP \cite{GAP4.5.5}, or else
by hand (using well-known facts about primitive groups of small degree). 

For $n=5$ or $7$, these representatives are of prime order, so $5$
and $7$ satisfy Condition~(\ref{enu:PrimeConjGenCond}). Proposition~\ref{prop:PrimeGenerationFails}
tells us that $8$ fails Condition~(\ref{enu:PrimeConjGenCond}),
and a similar argument or GAP computation shows that $6$ also fails
Condition~(\ref{enu:PrimeConjGenCond}).

\begin{table}
\begin{tabular}{c||l||l}
$n$ &
Maximal subgroup indices &
Cond.~(\ref{enu:PPConjGenCond}) conj. class representatives\tabularnewline
\hline 
\hline 
5 &
5, 6, 10 &
$(1\,2\,3),(1\,2\,3\,4\,5)$\tabularnewline
6 &
6, 10, 15 &
$(1\,2\,3\,4)(5\,6),(1\,2\,3\,4\,5)$\tabularnewline
7 &
7, 15, 21, 35 &
$(1\,2\,3\,4\,5),(1\,2\,3\,4\,5\,6\,7)$\tabularnewline
8 &
8, 15, 28, 35, 56 &
$(1\,2\,3\,4)(5\,6\,7\,8),(1\,2\,3\,4\,5)$\tabularnewline
\end{tabular}

\bigskip{}

\caption{\label{tab:IndicesInSmallA_n}Indices of maximal subgroups and generating
conjugacy class representatives for $A_{n}$, $5\leq n\leq8$. }
\end{table}

\section{\label{sec:Asymptotic-density}Asymptotic density}

In this section, we use Part~(\ref{enu:SgSievePrimeGen}) of Lemma~\ref{lem:SgdivSieve}
to prove Theorem~\ref{thm:AsymptoticDensity}. 

Lemma~\ref{lem:SgdivSieve} tells us that $n$ satisfies Condition~(\ref{enu:PrimeConjGenCond})
unless both the largest prime divisor $p$ of $n$ and the largest
prime $r$ that is less than $n-2$ are small relative to $n$. This
allows us to apply known and conjectured results about prime gaps,
which we combine with known results about numbers without large prime
divisors (``smooth numbers''). 

We  will use the following notation. 
\begin{itemize}
\item We will denote the $k$th smallest prime number by $p_{k}$. For example,
$p_{1}=2$ and $p_{2}=3$.
\item For a real number $x>2$, we will denote by $r(x)$ the largest prime
that is no larger than $x$.
\item For positive real numbers $x,y$, we will denote by $\Psi(x,y)$ the
number of positive integers no larger than $x$ which have no prime
factor larger than $y$.
\end{itemize}
Our strategy is to show that if $p$ is the largest prime divisor
of $n$, then asymptotically $r(n)+p$ is frequently greater than
$n$. We remark that $r(n)\geq n-2$ only on a set of asymptotic density
0, so we may treat the $r+2<n$ condition of Lemma~\ref{lem:SgdivSieve}~(\ref{enu:SgSievePrimeGen})
as reading $r\leq n$ for the purpose of asymptotic density arguments.

We will require several tools from number theory, as we will describe
below. See \cite{Granville:2008} for further background on (\ref{eq:DickmanEstimate})
and (\ref{eq:RankinEstimate}), and \cite{Granville:1995} for background
and history on (\ref{eq:CramerConjecture}) and (\ref{eq:CramerRHEstimate}). 

\subsection{Proof of Theorem~\ref{thm:AsymptoticDensity}~(A)}

Jia showed in \cite{Jia:1996} that, for any $\epsilon>0$, there
is a prime on the interval $[n,n+n^{\frac{1}{20}+\epsilon}]$ for
all $n$ excluding a set of asymptotic density $0$. It follows by
routine manipulation that 
\begin{equation}
n-r(n)<n^{\frac{1}{20}}\quad\mbox{except on a set of asymptotic density }0.\label{eq:Jia}
\end{equation}
 See \cite[Chapter 9]{Harman:2007} for further discussion of results
of this type.

Dickman showed in \cite{Dickman:1930} that 
\begin{equation}
\lim_{x\rightarrow\infty}\frac{\Psi(x,x^{1/u})}{x}=\rho(u)\mbox{\quad for any fixed }u,\label{eq:DickmanEstimate}
\end{equation}
where $\rho$ denotes the so-called \emph{Dickman-de Bruijn function},
that is, the solution to the differential equation $u\rho'(u)+\rho(u-1)=0$. 

By combining (\ref{eq:Jia}) and (\ref{eq:DickmanEstimate}) with
Lemma~\ref{lem:SgdivSieve}~(\ref{enu:SgSievePrimeGen}), we see
that the desired asymptotic density $\alpha$ satisfies 
\[
\alpha\geq1-\rho(20),
\]
as desired. Consulting the table of values for $\rho$ in \cite[Table 2]{Granville:2008},
we see that $\rho(20)\cong2.462\cdot10^{-29}<10^{-28}$.

\subsection{Proof of Theorem~\ref{thm:AsymptoticDensity}~(B)}

Rankin showed in \cite{Rankin:1938} that 
\begin{equation}
\lim_{x\rightarrow\infty}\frac{\Psi(x,\log^{b}x)}{x}=0\mbox{\quad for any }b>1.\label{eq:RankinEstimate}
\end{equation}
Taking $b=3$ in (\ref{eq:RankinEstimate}), we see that the set of
integers $n$ with no prime factor larger than $\log^{3}n$ has asymptotic
density 0. 

The Cramér Conjecture \cite[(4)]{Cramer:1936} says that there is
a constant $C$ such that 
\begin{equation}
p_{k+1}-p_{k}\leq C\log^{2}p_{k}\quad\mbox{for all }k.\label{eq:CramerConjecture}
\end{equation}
In the same paper, Cramér \cite[Theorem II]{Cramer:1936} showed the
Riemann Hypothesis to imply that 
\begin{equation}
\lim_{x\rightarrow\infty}\,\frac{1}{x}\,\,\,\,\,\cdot\negmedspace\negmedspace\negmedspace\negmedspace\negmedspace\sum_{\substack{p_{k}\leq x,\\
p_{k+1}-p_{k}\geq\log^{3}p_{k}
}
}(p_{k+1}-p_{k})=0.\label{eq:CramerRHEstimate}
\end{equation}
Thus, if either the Cramér Conjecture or the Riemann Hypothesis holds,
then 
\begin{equation}
n-r(n)\leq\log^{3}r(n)\leq\log^{3}n\label{eq:LogCubedEstimate}
\end{equation}
except on a set of asymptotic density zero. Theorem~\ref{thm:AsymptoticDensity}~(B)
follows upon combining (\ref{eq:RankinEstimate}) with $b=3$, (\ref{eq:LogCubedEstimate}),
and Lemma~\ref{lem:SgdivSieve}.

\section{\label{sec:Computation}Computational results}

\begin{table}
\begin{tabular}{rclc|c|lc}
$n$ &
 &
 &
 &
$p^{a}$ &
Cond.~(\ref{enu:UnivGenCond}) prime pairs &
\tabularnewline[\doublerulesep]
\hline 
\noalign{\vskip\doublerulesep}
$31\comma416$ &
= &
$2^{3}\cdot3\cdot7\cdot11\cdot17$ &
 &
$17^{1}$ &
$(2,7853)$ &
\tabularnewline
$46\comma800$ &
= &
$2^{4}\cdot3^{2}\cdot5^{2}\cdot13$ &
 &
$5^{2}$ &
$(2,149)$ &
\tabularnewline
$195\comma624$ &
= &
$2^{3}\cdot3^{2}\cdot11\cdot13\cdot19$ &
 &
$19^{1}$ &
$(2,3)$ &
\tabularnewline
$5\comma504\comma490$ &
= &
$2\cdot3^{3}\cdot5\cdot19\cdot29\cdot37$ &
 &
$37^{1}$ &
$(3,5)$ &
\tabularnewline
$7\comma458\comma780$ &
= &
$2^{2}\cdot3\cdot5\cdot7^{2}\cdot43\cdot59$ &
 &
$59^{1}$ &
$(2,276251)$ &
\tabularnewline
$9\comma968\comma112$ &
= &
$2^{4}\cdot3^{2}\cdot7\cdot11\cdot29\cdot31$ &
 &
$31^{1}$ &
$(2,3)$ &
\tabularnewline
$12\comma387\comma600$ &
= &
$2^{4}\cdot3^{3}\cdot5^{2}\cdot31\cdot37$ &
 &
$37^{1}$ &
$(2,3)$ &
\tabularnewline
$105\comma666\comma600$ &
= &
$2^{3}\cdot3\cdot5^{2}\cdot13\cdot19\cdot23\cdot31$ &
 &
$31^{1}$ &
$(2,5)$ &
\tabularnewline
$115\comma690\comma848$ &
= &
$2^{5}\cdot3\cdot7\cdot13\cdot17\cdot19\cdot41$ &
 &
$41^{1}$ &
$(2,3)$ &
\tabularnewline
$130\comma559\comma352$ &
= &
$2^{3}\cdot3\cdot7\cdot11\cdot31\cdot43\cdot53$ &
 &
$53^{1}$ &
$(2,112843)$ &
\tabularnewline
$146\comma187\comma444$ &
= &
$2^{2}\cdot3\cdot13\cdot19\cdot31\cdot37\cdot43$ &
 &
$43^{1}$ &
$(2,31)$ &
\tabularnewline
$225\comma613\comma050$ &
= &
$2\cdot3\cdot5^{2}\cdot13\cdot37\cdot53\cdot59$ &
 &
$59^{1}$ &
$(2,516277)$ &
\tabularnewline
$275\comma172\comma996$ &
= &
$2^{2}\cdot3\cdot7\cdot29\cdot37\cdot43\cdot71$ &
 &
$71^{1}$ &
$(2,567367)$ &
\tabularnewline
$282\comma429\comma840$ &
= &
$2^{4}\cdot3\cdot5\cdot7\cdot11\cdot17\cdot29\cdot31$ &
 &
$31^{1}$ &
$(2,29)$ &
\tabularnewline
$300\comma688\comma752$ &
= &
$2^{4}\cdot3\cdot7\cdot13\cdot23\cdot41\cdot73$ &
 &
$73^{1}$ &
$(2,11)$ &
\tabularnewline
$539\comma509\comma620$ &
= &
$2^{2}\cdot3\cdot5\cdot13\cdot17\cdot23\cdot29\cdot61$ &
 &
$61^{1}$ &
$(2,1201)$ &
\tabularnewline
$653\comma426\comma796$ &
= &
$2^{2}\cdot3\cdot11\cdot19\cdot43\cdot73\cdot83$ &
 &
$83^{1}$ &
$(2,73)$ &
\tabularnewline
$696\comma595\comma536$ &
= &
$2^{4}\cdot3^{2}\cdot7\cdot13\cdot17\cdot53\cdot59$ &
 &
$59^{1}$ &
$(2,13)$ &
\tabularnewline
$784\comma474\comma592$ &
= &
$2^{5}\cdot11\cdot29\cdot31\cdot37\cdot67$ &
 &
$67^{1}$ &
$(2,29)$ &
\tabularnewline
$798\comma772\comma578$ &
= &
$2\cdot3\cdot19\cdot29\cdot41\cdot71\cdot83$ &
 &
$83^{1}$ &
$(2,563)$ &
\tabularnewline
$815\comma224\comma800$ &
= &
$2^{5}\cdot3\cdot5^{2}\cdot13\cdot17\cdot29\cdot53$ &
 &
$53^{1}$ &
$(2,87013)$ &
\tabularnewline
$851\comma716\comma320$ &
= &
$2^{5}\cdot3\cdot5\cdot7\cdot13\cdot17\cdot31\cdot37$ &
 &
$37^{1}$ &
$(2,31)$ &
\tabularnewline
\end{tabular}\bigskip{}
\caption{\label{tab:ExceptionsToLargePP}The values of $n\leq1\protect\comma000\protect\comma000\protect\comma000$
together with their maximal prime power divisors $p^{a}$, such that
$n$ does not satisfy Condition~(\ref{enu:UnivGenCond}) with $p$.
Each such $n$ satisfies Condition~(\ref{enu:UnivGenCond}) with
either $2$ or $3$.}
\end{table}

In this section we describe the verification by computer of Proposition~\ref{prop:MainComputation}.

Our program iterates through the integers, beginning with $n=9$.
We factor each integer into primes. If $n$ is a prime power, then
$n$ satisfies Condition~(\ref{enu:PPOverSylConjGenCond}) and hence
(\ref{enu:UnivGenCond}) by Proposition~\ref{pro:UgenAltPP}. In
this case, we store $n=r^{b}$ as the largest prime power known so
far in the computation. Otherwise, we find the largest prime power
$p^{a}$ dividing $n$. The program then checks whether $p^{a}+r^{b}$
is greater than $n$, where $r^{b}$ is the largest prime power found
so far. If so, then $n$ satisfies Condition~(\ref{enu:UnivGenCond})
with $p$ and $r$ by Lemma~\ref{lem:SgdivSieve}. This sieving method
succeeds for all  but $14\comma638$ of the integers in the interval
from $9$ to $1\comma000\comma000\comma000$. For these remaining
integers, the program checks directly which indices of intransitive
and imprimitive subgroups are divisible by $p$ (using Lemmas~\ref{lem:KummerTheorem}
and \ref{lem:ImprimitiveDivIndex}), and searches for a prime $r$
dividing those that are not. This second method works for all but
$22$ of the remaining $14\comma638$ integers. For these $22$ integers
we perform a similar search, using divisors of $n$ other than $p$.
See Table~\ref{tab:ExceptionsToLargePP} for the results of this
search.

Running this program out to $n=1\comma000\comma000\comma000$ on a
2012 MacBook Pro with the GAP computer algebra system \cite{GAP4.5.5}
takes around 2 weeks. This computation verifies Proposition~\ref{prop:MainComputation}.

\smallskip{}

\begin{figure}
\includegraphics[bb=0bp 0bp 479bp 370bp,scale=0.7]{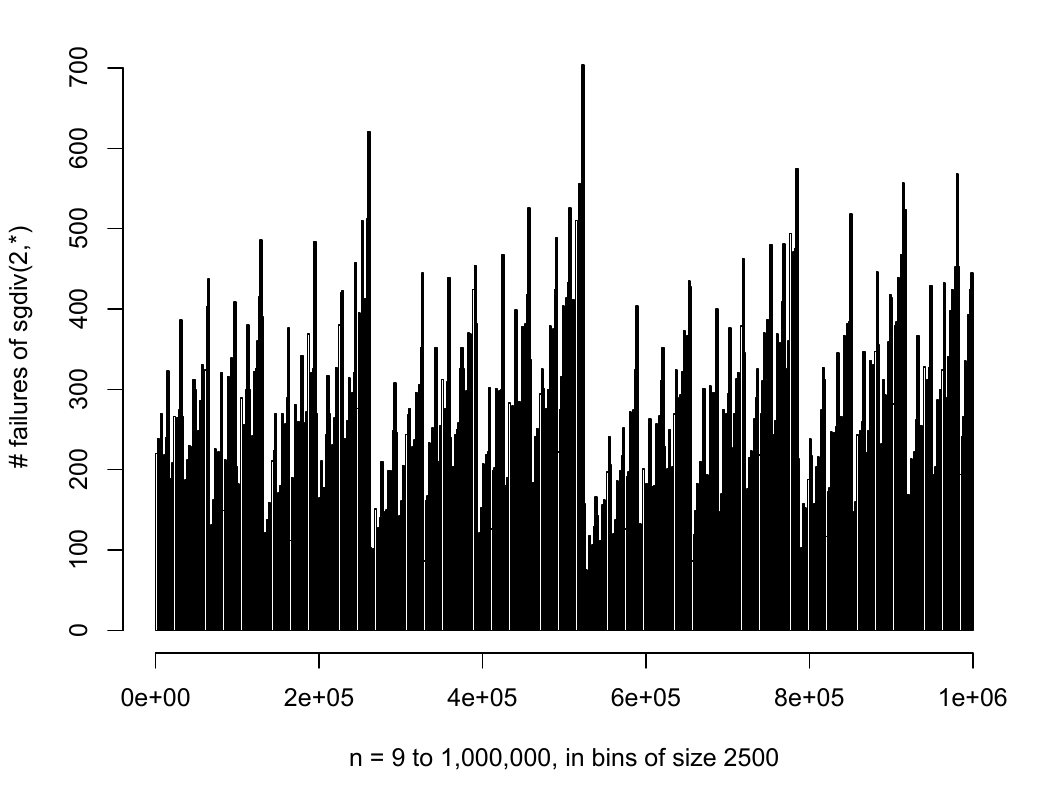}\vspace{-0.5cm}
\caption{\label{fig:DensitySgdiv2Hist}A histogram showing the density of integers
that do not meet Condition~(\ref{enu:UnivGenCond}) with the prime
$2$.}
\end{figure}

We approach checking which values of $n$ satisfy Condition~(\ref{enu:UnivGenCond})
with the prime $2$ in a similar fashion. When we apply Lemma~\ref{lem:SgdivSieve},
we look for a pair $p^{a}+r^{b}>n>r^{b}$ (where $p^{a}\mid n$) as
before, but now we require $2\in\{p,r\}$. This technique gives a
positive answer for about 45.7\% of the first $1\comma000\comma000$
integers $n\geq9$. The remaining values of $n$ require significantly
more computation, and as a result we did not examine values of $n$
beyond $1\comma000\comma000$. 

Running the program to check Condition~(\ref{enu:UnivGenCond}) with
the prime $2$ out to $n=1\comma000\comma000$ takes around a day
on a 2012 MacBook Pro. This computation verifies Proposition~\ref{prop:2Computation}.
More precisely, $867\comma247$ of the integers between $9$ and $1\comma000\comma0000$
satisfy Condition~(\ref{enu:UnivGenCond}) with $2$. The histogram
in Figure~\ref{fig:DensitySgdiv2Hist} shows the density of those
$n$ which do not satisfy Condition~(\ref{enu:UnivGenCond}) with
$2$. We remark that this histogram appears to show that the failing
values are concentrated towards the values of $n$ slightly preceding
integers that are divisible by a high power of $2$.

Source code and output for all computer programs discussed in this
section is available through the arXiv as ancillary files. They are
also currently available from the second author's web page. A list
of the values of $n\leq1\comma000\comma000$ such that $n$ does not
satisfy Condition~(\ref{enu:UnivGenCond}) with the prime $2$ can
be found in the same places.

\bibliographystyle{2_Users_russw_Documents_Research_mypapers_Divisibility_of_binomial_coefficients_hamsplain}
\bibliography{1_Users_russw_Documents_Research_mypapers_Divisibility_of_binomial_coefficients_Master}

\end{document}